\newtheorem{theorem}{Theorem}[section]
\newtheorem{lemma}[theorem]{Lemma}
\newtheorem{proposition}[theorem]{Proposition}
\theoremstyle{definition}
\newtheorem{example}[theorem]{Example}
\newtheorem{definition}[theorem]{Definition}
\newcommand{\R}{{\mathbb{R}}}
\newcommand{\Q}{{\mathbb{Q}}}
\newcommand{\Z}{{\mathbb{Z}}}
\newcommand{\N}{{\mathbb{N}}}
\newcommand{\Id}{{\mathbbm{1}}}
\newcommand{\CPn}{{\mathbb{C} \mathbb{P}^n}}
\newcommand{\bg}{{\bar{g}}}
\newcommand{\Cal}{{\hbox{\it Cal\,}}}
\newcommand{\Ham}{{\hbox{\it Ham\,}}}
\newcommand{\tHam}{\widetilde{\hbox{\it Ham}\, }}
\newcommand{\Poincare}{Poincar\'e}
\begin{document}

\title{Hofer Growth of $C^1$-generic Hamiltonian flows }

\author{Asaf Kislev\thanks{Partially supported by European Research Council advanced grant 338809}}

\maketitle
\begin{abstract}
We prove that on certain closed symplectic manifolds a $C^1$-generic cyclic subgroup of the universal cover of the group of Hamiltonian diffeomorphisms is undistorted with respect to the Hofer metric.
\end{abstract}

\section{Introduction}

\subsection{Hofer growth of cyclic subgroups}

Let $(M^{2n},\omega)$ be a closed symplectic manifold. We denote by $\Ham(M)$ the group of Hamiltonian diffeomorphisms and by $\tHam(M)$ its universal cover.
For elements in $\tHam(M)$ we use Greek letters and for elements in $\Ham(M)$ Roman letters. For instance, we will write $\phi \in \tHam(M)$ or $[ \{f_t\}_{t\in [0,1]}] \in \tHam(M) $, where $\{f_t\} \subset \Ham(M)$ is a smooth path of Hamiltonian diffeomorphisms with $f_0 = \Id$, and $[ \{f_t\}_{t\in [0,1]}]$ stands for the homotopy class with fixed end points.
When we write $f$ with no subscript we are referring to the time-1-map $f=f_1$.

The Hofer metric on $\Ham(M)$ is defined by
\[ d(g,f) = \inf(\int_0^1 \max|H_t| dt),\]
where the infimum is taken over all Hamiltonian functions $H$ which generate $f g^{-1}$ as their time-1-map.

We denote the lift of the Hofer metric to $\tHam(M)$ also by $d$, i.e.
\[d(\psi,\phi) = \inf(\int_0^1 \max|H_t| dt),\]
where the infimum is taken over all Hamiltonian functions $H$ whose Hamiltonian flow is in class $\phi \psi^{-1}$.

Let $\{f^n\}_{n \in \Z} \subset \Ham(M)$ be a cyclic subgroup. We say that $\{f^n\}_{n \in \Z}$ is \begin{it}undistorted\end{it} if
\[\lim_{n \to \infty} \frac{d(\Id, f^n)}{n}  > 0.\]
Note that the limit always exists because $d(\Id, f^n)$ is a subadditive sequence.
Similarly for a cyclic subgroup $\{\phi^n \}_{n \in \Z} \subset \tHam(M)$ , we say that $\{\phi^n \}_{n \in \Z}$ is \begin{it}undistorted\end{it} if
\[\lim_{n \to \infty} \frac{d(\Id, \phi^n)}{n}  > 0.\]

The distortion of subgroups of $\tHam(M)$ has been studied on various occasions in connection to Hamiltonian dynamics and ergodic theory, see e.g. \cite[chapters 8,11]{2}.

In the autonomous case, it has been proved that there exists a $C^0$-open and $C^\infty$-dense subset $\mathcal{A}$ of the set of autonomous normalized Hamiltonian functions such that for every $F \in \mathcal{A}$, the cyclic subgroup generated by the Hamiltonian flow of $F$ is undistorted (see \cite[chapter 6]{3}).

In this article we give a similar statement for a $C^1$-generic time dependent element in $\tHam(M)$.
When we say \begin{it}$C^1$-generic\end{it} we mean that the set of elements in $\tHam(M)$ that generate undistorted cyclic subgroups has a $C^1$-open and dense subset.

Let us recall the definition of the $C^1$ topology on $\tHam(M)$.
It is known (see \cite{1}) that $\Ham(M)$ is locally simply connected.
Fix a basis $\{U_i\}$ of simply connected $C^1$-neighborhoods of $\Id$ in $\Ham(M)$.
Let $\tilde{U}_i$ be the lift of $U_i$ to $\tHam(M)$ that contains $\Id \in \tHam(M)$.
By definition, the sets $\{\phi \tilde{U}_i\}$ form a basis of $C^1$-neighborhoods of $\phi \in \tHam(M)$.

\textbf{Digression on Chern classes:}
$H^i(M;\Q)$ contains a lattice $H^i(M;\Z) / \text{torsion}$ whose elements are called \begin{it}integral classes\end{it}.
The Chern classes with rational coefficients are by definition integral.
In what follows we abbreviate $H^i(M) := H^i(M;\Q)$.

Before we state the main results of this paper, let us give the following definitions.

\begin{definition}

Let
\[ \alpha \in H^*(M) := \oplus_i H^i(M) .\]
When we write $\deg(\alpha)$ we mean the maximal $k$ such that the projection of $\alpha$ to $H^{2k}(M)$ is non-zero.

\end{definition}
\begin{definition}
Let $M^{2n}$ be a closed symplectic manifold. Let $c(M) \in H^*(M)$ be the full Chern class of $TM$.
We say that there exists an \begin{it}even factorization\end{it} of $c(M)$ if we can write
\[ c(M) = \alpha \beta,\]
where
\[ \deg(\alpha) + \deg(\beta) \leq n,\]
\[ 0 < \deg(\alpha) < n,\]
the classes $\alpha$ and $\beta$ are integral classes,
and $\alpha$ has only terms of even degree.
In this case we say that $c(M) = \alpha \beta$ is an even factorization.
When we say that $\alpha$ has only terms of even degree we mean that the projection of $\alpha$ to $H^{2k}(M)$ is zero for odd $k$.
\end{definition}

Our motivation for the above deﬁnition is that symplectic manifolds with an even factorization cannot admit partially hyperbolic Hamiltonian diffeomorphisms (see Section 3). This has been announced by Bennequin and is detailed in \cite[pp. 137--138]{6}.

We are now ready to state the main theorem.

\begin{theorem}
\label{t5}
Let $M$ be a closed symplectic manifold with $H^1(M) = 0$. If the top Chern class $c_n(M) \neq 0$ and there is no even factorization of the full Chern class,
then the set of elements in $\tHam(M)$ that generate undistorted cyclic subgroups is $C^1$-generic.
\end{theorem}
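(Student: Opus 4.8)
\emph{Sketch of the intended argument.}

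Since $d(\Id,\phi^n)$ is subadditive in $n$, the growth rate $\lim_n d(\Id,\phi^n)/n$ is unchanged when $\phi$ is replaced by a fixed power $\phi^m$; so it suffices to detect undistortion after passing to iterates, and in particular we may work with a fixed point of some $\phi^m$ rather than of $\phi$ itself. The plan is to produce a $C^1$-open and $C^1$-dense set $\mathcal V\subset\tHam(M)$ such that every $\phi\in\mathcal V$ has an iterate carrying a robust ``twisting'' periodic point, and to show that such a point forces $d(\Id,\phi^n)\ge cn$ for some $c=c(\phi)>0$.

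For the density of $\mathcal V$ we combine a dynamical and a symplectic-topological input. Because $H^1(M)=0$ we have $\Ham(M)=\Symp_0(M)$, so the $C^1$-generic dichotomy for symplectic diffeomorphisms --- Newhouse's theorem in dimension two and its higher-dimensional analogues --- applies and says that a $C^1$-generic Hamiltonian diffeomorphism of $M$ is either partially hyperbolic or has a dense set of elliptic periodic points. The result recalled in Section 3, announced by Bennequin, shows that a closed symplectic manifold admitting an even factorization of its full Chern class carries no partially hyperbolic Hamiltonian diffeomorphism; since $M$ has no even factorization, the first alternative of the dichotomy never occurs here, so a $C^1$-generic $f\in\Ham(M)$ has a dense set of elliptic periodic points. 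A further, arbitrarily $C^1$-small perturbation supported near one such point installs a structurally stable feature --- a simple pair of eigenvalues on the unit circle in the linearized return map, together with a transverse homoclinic orbit of an accompanying hyperbolic periodic point. Such features persist under $C^1$-perturbation: nondegenerate periodic points persist, a simple pair of unit-circle eigenvalues of a symplectic matrix cannot leave the unit circle, transverse homoclinic intersections are structurally stable, and the associated actions vary continuously ($H^1(M)=0$ keeps the action functional and the Floer complexes unambiguous). Hence the set $\mathcal V$ of such $\phi$ is $C^1$-open, and by the above it is $C^1$-dense.

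The core of the argument is that this feature implies undistortion. After passing to the relevant iterate we have a fixed point $p$ of $\phi$ carrying the twisting structure; near $p$ one writes down an autonomous model --- a Birkhoff--Poincar\'e normal form, or an autonomous Hamiltonian flow that is $C^1$-close to $\phi$ on a neighbourhood of $p$ --- whose generating function can be adjusted, within the $C^1$-open condition defining $\mathcal V$, to fall within the scope of the cited autonomous theorem, so that the model flow is itself undistorted with a linearly growing spectral norm. One then transfers this growth to $\phi$: the iterates $\phi^n$ agree near $p$ with the time-$n$ flow of the model, and a locality property of the Floer-theoretic quantity that bounds Hofer distance from below --- the spectral norm $\gamma$, or a boundary-depth/barcode invariant --- yields $d(\Id,\phi^n)\ge\gamma(\phi^n)\ge cn$. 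The hypothesis $c_n(M)\ne0$, equivalently $\chi(M)\ne0$, enters here: it ensures that the fundamental class and the point class are genuinely distinct in quantum homology, and, more importantly, that the dynamics seen near $p$ cannot be displaced away, so that the twist contributes to the global invariant and not merely to the action spectrum.

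I expect this last step to be the main obstacle. The naive mechanism --- exhibiting, for each $n$, periodic orbits of $\phi^n$ whose actions spread over an interval of length $\sim n$ --- is insufficient, because Hofer distance is bounded below by the spectral norm rather than by the diameter of the action spectrum; rigid rotations already have linearly spreading action spectra while being Hofer-bounded. Making a local twist visible to the spectral norm requires the topological input tied to $c_n(M)\ne0$, and the tension between the higher-jet nature of a genuine twist and the requirement that $\mathcal V$ be $C^1$-open is exactly what forces the argument to run through structurally stable objects --- homoclinic points and horseshoes --- rather than through the elliptic point alone. Making this local-to-global transfer of the spectral-norm estimate work, uniformly enough to survive $C^1$-perturbation, is the technical crux.
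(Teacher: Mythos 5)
Your proposal gets the high‐level strategy right — combine the Saghin–Xia $C^1$-dichotomy with Bennequin's Chern‐class obstruction to partial hyperbolicity, then leverage a lower bound for Hofer's distance via a Floer‐theoretic invariant — but the central step is left as an acknowledged "technical crux," and the route you sketch for it is not the one that works. In the paper the undistortion is detected by the asymptotic spectral invariant $\sigma$ via $d(\Id,\phi)\geq|\sigma(\phi)|$ and homogeneity, and the genericity of $\chi=\{\sigma\neq0\}$ is proved by two logically independent mechanisms that your proposal conflates. Openness is \emph{not} obtained by installing a structurally stable dynamical feature (homoclinic orbit, horseshoe, or unit‐circle eigenvalues) that persists under perturbation; it is the elementary observation that $\sigma$ is $C^1$-continuous (Theorem~\ref{t6}, which reduces to the fact that a Hamiltonian diffeomorphism $C^1$-close to $\Id$ is generated by a $C^0$-small Hamiltonian), so $\{\sigma\neq0\}$ is automatically open. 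Density is then a separate local perturbation argument: starting from an elliptic periodic point one first perturbs so that the return map near the orbit becomes a linear map with rational‐angle eigenvalues, hence some iterate is the identity on a small ball; one then finds a displaceable orbit $U=\cup_{j} g^j(B)$ of a smaller ball, and adds a commuting Hamiltonian $h^\epsilon$ supported in $U$ with $\Cal(h^\epsilon)\neq0$. Because the two factors commute and $h^\epsilon$ is supported in a displaceable set, Proposition~\ref{p2} gives $\sigma(g h^\epsilon)=\sigma(g)+\sigma(h^\epsilon)$, which can be made nonzero. No normal form, horseshoe, or local‐to‐global transfer of spectral norms is needed, and crucially there is no tension with the $C^1$ topology of the kind you worry about: the construction deliberately changes only the time-1 map by a $C^1$-small (not $C^k$-small) modification of a generating function, precisely because the whole argument only needs $C^1$-openness and -density.

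Two further points where your proposal diverges from what is actually proved. First, the role of $c_n(M)\neq0$ has nothing to do with quantum homology or with preventing the local dynamics from being "displaced away." It is used purely inside the Chern‐class obstruction: if $f$ were partially hyperbolic, $E^s$ would be isotropic and Theorem~\ref{t2} would give a factorization $c(M)=\alpha\beta$ with $\alpha$ even, $\deg\alpha\le\mathrm{rank}(E^s)$, $\deg\beta\le n-\mathrm{rank}(E^s)$; the hypothesis $c_n(M)\neq0$ forces $\deg(c(M))=n$, which pins $\deg\alpha=\mathrm{rank}(E^s)>0$ and (together with $E^c\neq0$) yields $0<\deg\alpha<n$, i.e.\ a genuine even factorization, contradicting the assumption. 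Second, your caution that "rigid rotations have linearly spreading action spectra while being Hofer‐bounded" is a real phenomenon, but the paper sidesteps it entirely: it never compares actions of periodic orbits of $\phi^n$. It compares $\sigma$ of the original element against $\sigma$ of a commuting, displaceably supported perturbation, using the quasi‐additivity of $\sigma$ on commuting elements with one displaceable factor. As written, your sketch identifies the correct generic dichotomy and the correct lower‐bound tool, but does not contain a working argument for the density step, and the specific mechanism you propose (homoclinic tangles, Birkhoff normal form) would in fact run into the $C^k$ versus $C^1$ obstruction you yourself flag, since normal forms control higher jets.
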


The following theorem is a corollary (for its proofs see Example \ref{e2}).
\begin{theorem}
\label{t10}
Let $M^4$ be a four dimensional closed symplectic manifold with $H^1(M) = 0$, and $c_2(M) \neq 0$.
Then the set of elements in $\tHam(M)$ that generate undistorted cyclic subgroups is $C^1$-generic.

\end{theorem}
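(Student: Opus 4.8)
The plan is to deduce Theorem \ref{t10} directly from Theorem \ref{t5}, so that essentially all of the work is already contained in that theorem and the only task is a short cohomological verification. Here $2n = \dim M = 4$, hence $n = 2$. Two of the three hypotheses of Theorem \ref{t5} are handed to us outright: $H^1(M) = 0$, and $c_n(M) = c_2(M) \neq 0$. It therefore suffices to check the remaining hypothesis, namely that the full Chern class $c(M)$ admits no even factorization; once this is established, Theorem \ref{t5} applies word for word and gives the conclusion.

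To check this, I would argue by contradiction. Suppose $c(M) = \alpha \beta$ were an even factorization, with $n = 2$. Since $c(M) = 1 + c_1(M) + c_2(M)$ is nonzero, $\alpha \neq 0$ and $\deg(\alpha)$ is well defined as a non-negative integer. The constraint $0 < \deg(\alpha) < n = 2$ leaves only the value $\deg(\alpha) = 1$, so by the definition of $\deg$ the projection of $\alpha$ to $H^{2\cdot 1}(M) = H^2(M)$ is nonzero. But the requirement that $\alpha$ have only terms of even degree means precisely that the projection of $\alpha$ to $H^{2k}(M)$ vanishes whenever $k$ is odd; taking $k = 1$, the projection of $\alpha$ to $H^2(M)$ vanishes. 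This contradicts the previous sentence, so no even factorization of $c(M)$ exists, and the hypotheses of Theorem \ref{t5} are satisfied in full.

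The single point that warrants attention is the bookkeeping between the two meanings of ``degree'' at play in the definitions: the ordinary cohomological degree of a homogeneous class versus the half-degree $k$ labelling $H^{2k}(M)$, which is what both $\deg(\alpha)$ and the phrase ``even degree'' refer to. With $n = 2$ the admissible window $0 < \deg(\alpha) < n$ collapses to the single odd value $\deg(\alpha) = 1$, and it is exactly this forced parity clash that rules out an even factorization, which is also why no hypothesis on $M^4$ beyond $H^1(M) = 0$ and $c_2(M) \neq 0$ is needed. There is no genuine obstacle inside this corollary itself; the substantive difficulty lives entirely in Theorem \ref{t5}, which is being invoked here as a black box.
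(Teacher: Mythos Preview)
Your proposal is correct and follows exactly the same route as the paper: reduce to Theorem~\ref{t5} by checking that no even factorization of $c(M)$ can exist when $n=2$, since the constraint $0<\deg(\alpha)<2$ forces $\deg(\alpha)=1$, which is incompatible with $\alpha$ having only even-degree terms. The paper's proof (Example~\ref{e2}) is the same argument stated in two lines.
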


In particular, for $M = \mathbb{C}\mathbb{P}^2$, $C^1$-generic elements generate undistorted cyclic subgroups. In fact  for $\CPn$ we can upgrade the theorem and formulate it with respect to cyclic subgroups of $\Ham(M)$.

\begin{theorem}
\label{t7}
The set of Hamiltonian diffeomorphisms in $\Ham(\CPn)$ that generate undistorted cyclic subgroups of $\Ham(\CPn)$ has a $C^1$-open and dense subset.
\end{theorem}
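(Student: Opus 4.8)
The plan is to descend the conclusion of Theorem~\ref{t5} from $\tHam(\CPn)$ to $\Ham(\CPn)$, exploiting that the covering projection $\pi\colon\tHam(\CPn)\to\Ham(\CPn)$ has a \emph{finite} deck group. First I would check that $\CPn$ satisfies the hypotheses of Theorem~\ref{t5}: one has $H^1(\CPn)=0$; the top Chern class is $c_n(\CPn)=(n+1)x^n\neq 0$, where $x\in H^2(\CPn)$ is the hyperplane class; and the full Chern class $c(\CPn)=(1+x)^{n+1}$ admits no even factorization, since an even factor $\alpha$ would have to lie in $\Q[x^2]$ with $0<\deg\alpha<n$, and a short computation in $\Q[x]/(x^{n+1})$ shows no factorization of this shape exists (this is the content of Example~\ref{e2}). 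By Theorem~\ref{t5} the set $\tilde{\mathcal G}\subseteq\tHam(\CPn)$ of elements generating undistorted cyclic subgroups then contains a $C^1$-open and $C^1$-dense subset $\tilde{\mathcal U}$.

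Second, I would set up the dictionary between cyclic subgroups upstairs and downstairs. The deck group of $\pi$ is $\pi_1(\Ham(\CPn))$, which is finite, so $C:=\max_{\psi\in\pi_1(\Ham(\CPn))}d(\Id,\psi)<\infty$. Given $f\in\Ham(\CPn)$, fix a lift $\phi\in\tHam(\CPn)$, so that $\pi(\phi^k)=f^k$ for every $k$. Unravelling the definitions, a Hamiltonian flow has time-$1$-map $f^k$ precisely when its homotopy class equals $\phi^k\psi$ for some $\psi\in\pi_1(\Ham(\CPn))$, whence $d(\Id,f^k)=\inf_{\psi\in\pi_1(\Ham(\CPn))}d(\Id,\phi^k\psi)$. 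Taking $\psi=\Id$ gives the upper bound $d(\Id,f^k)\le d(\Id,\phi^k)$, while the triangle inequality and bi-invariance of the Hofer metric on $\tHam(\CPn)$ give $d(\Id,\phi^k\psi)\ge d(\Id,\phi^k)-d(\Id,\psi)\ge d(\Id,\phi^k)-C$ for every $\psi$, hence the lower bound $d(\Id,f^k)\ge d(\Id,\phi^k)-C$. Therefore
\[ d(\Id,\phi^k)-C\ \le\ d(\Id,f^k)\ \le\ d(\Id,\phi^k)\qquad(k\in\N), \]
so, dividing by $k$ and letting $k\to\infty$, the cyclic subgroup generated by $f$ in $\Ham(\CPn)$ is undistorted if and only if the one generated by $\phi$ in $\tHam(\CPn)$ is; in particular this property is independent of the choice of lift.

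Third, I would conclude. A covering map is open, and $\pi$ is moreover a continuous surjection, so $\mathcal U:=\pi(\tilde{\mathcal U})$ is $C^1$-open and $C^1$-dense in $\Ham(\CPn)$. For any $f\in\mathcal U$ pick $\phi\in\tilde{\mathcal U}\subseteq\tilde{\mathcal G}$ with $\pi(\phi)=f$; then $\phi$ generates an undistorted cyclic subgroup of $\tHam(\CPn)$, and by the dictionary of the previous step $f$ generates an undistorted cyclic subgroup of $\Ham(\CPn)$. Thus $\mathcal U$ is the desired $C^1$-open and dense subset, which proves the theorem.

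The only genuinely non-formal ingredient here is the finiteness of $\pi_1(\Ham(\CPn))$ — equivalently, that the deck group of $\tHam(\CPn)\to\Ham(\CPn)$ has finite Hofer diameter; everything else is point-set topology together with the triangle inequality for the Hofer metric. This is exactly the feature of $\CPn$ that a general $M$ as in Theorems~\ref{t5}--\ref{t10} need not share, which is why those statements are phrased in terms of $\tHam(M)$ rather than $\Ham(M)$.
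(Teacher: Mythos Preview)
Your argument hinges on the claim that $\pi_1(\Ham(\CPn))$ is finite, and this is the genuine gap. Finiteness is known for $n=1$ (Smale: $\Ham(S^2)\simeq SO(3)$, so $\pi_1=\Z/2$) and $n=2$ (Gromov: $\Symp(\mathbb{CP}^2,\omega_{FS})\simeq PU(3)$, so $\pi_1=\Z/3$), but for $n\ge 3$ the homotopy type of $\Ham(\CPn)$ is not understood and the finiteness of its fundamental group is an open problem. Note also that finiteness of $\pi_1$ and finiteness of its Hofer diameter are \emph{not} equivalent, contrary to what you write in the last paragraph: the former implies the latter, but not conversely, and even the weaker statement (bounded Hofer diameter of the deck group) does not appear to be known for $n\ge 3$. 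So as written the proof is valid only for $\mathbb{CP}^1$ and $\mathbb{CP}^2$.

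The paper takes a different and more robust route: rather than comparing Hofer lengths upstairs and downstairs via the deck group, it invokes McDuff's result (reference \cite{8} in the paper) that the asymptotic spectral invariant $\sigma$ itself descends from $\tHam(\CPn)$ to $\Ham(\CPn)$. Once $\sigma$ is defined on $\Ham(\CPn)$, the inequality $d(\Id,f^k)\ge|\sigma(f^k)|=k\,|\sigma(f)|$ gives undistortion directly for every $f$ in $\pi(\chi)$, and $\pi(\chi)$ is $C^1$-open and dense since $\pi$ is open, continuous and surjective. This is the content of Theorem~\ref{t9}; the verification that $\CPn$ has no even factorization is carried out in Example~\ref{e1} (not Example~\ref{e2}, which treats four-manifolds) by a roots-of-unity argument. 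Your overall architecture---check the Chern-class hypotheses, then push the $\tHam$ statement down to $\Ham$---is the same as the paper's, but the descent mechanism you chose is the wrong one for general $n$.
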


\subsection{Idea of the proof}
Consider the set
\[ \chi = \{ \phi \in \tHam(M) : \sigma(\phi) \neq 0 \},\]
where $\sigma$ is an asymptotic spectral invariant (see Section 1.4 for the definition).
We will show that $\chi$ is a subset of the set of elements in $\tHam(M)$ that generate undistorted cyclic subgroups (see Proposition \ref{p1}), and that in certain manifolds $\chi$ is a $C^1$-open and dense set in $\tHam(M)$.

In Section 2 we prove that $\chi$ is $C^1$-open.

In our proof that $\chi$ is $C^1$-dense we restrict to the case where the set of elements in $\tHam(M)$ that have an elliptic periodic point is $C^1$-dense.
In Section 3 we give a method to check that this is the case by examining the full Chern class of $TM$. We show that if there is no even factorization then the set of elements in $\tHam(M)$ that have an elliptic periodic point is $C^1$-dense. First we follow Bennequin (see \cite{6}) and show that if there is no even factorization then there are no partially hyperbolic symplectomorphisms. Next we use a result by Saghin and Xia (\cite{4}) which states that a $C^1$-generic symplectomorphism which is not partially hyperbolic has an elliptic periodic point. (The definitions of elliptic periodic points and partially hyperbolic symplectomorphisms are given in the next subsection.)

In Section 4 we deal with the last step of the proof, which is to show that for every $\phi \in \tHam(M)$ with an elliptic periodic point, we can $C^1$-perturb $\phi$ to $\tilde \phi \in \widetilde Ham (M)$ with $\tilde \phi \in \chi$. For the main part of the construction of the perturbation we follow Bonnati, Crovisier, Vago and Wilkinson (\cite{5}). This shows that if the set of elements in $\tHam(M)$ that have an elliptic periodic point is $C^1$-dense, then $\chi$ is $C^1$-dense in $\tHam(M)$.

In Section 5 we discuss whether our results can be applied to $\Ham(M)$, that is whether a $C^1$-generic element in $\Ham(M)$ generates an undistorted cyclic subgroup with respect to the Hofer metric.

In Section 6 we give examples of manifolds that satisfy the requirements on the full Chern class. For these manifolds a $C^1$-generic element in $\tHam(M)$ generates an undistorted cyclic subgroup.

\subsection{Partially hyperbolic maps and elliptic periodic points}
\label{s1}

Let $M^{2n}$ be a closed symplectic manifold such that $H^1(M) = 0$, and let $f \in Ham(M)$.
In this section and also throughout the paper we assume that an auxiliary Riemannian metric has been chosen.

A point $p \in M$ is called an \begin{it}elliptic $l$-periodic point\end{it} if $f^l(p) = p$ and all of the eigenvalues of $d_p (f^l)$ are simple, not real and of norm 1.

A continuous splitting of the tangent bundle $TM = A \oplus B$ is called \begin{it}invariant\end{it} if it is invariant under $df$.
For an invariant splitting we say that $A$ \begin{it}dominates\end{it} $B$ if there exists $m > 0$ such that for every $x \in M$ and any two unit vectors $u \in A_x, v \in B_x$, the following inequality holds
\[ \| d_x f^m (u) \| \geq 2 \| d_x f^m (v) \|. \]

A diffeomorphism $f$ is called \begin{it}partially hyperbolic\end{it} if the following conditions hold:
\begin{enumerate}
\item There is an invariant splitting $TM = E^u \oplus E^c \oplus E^s$ with at least two of them non-trivial.
\item $E^u$ is uniformly expanding, i.e. there exist $\alpha > 1$ and $a > 0$ such that $ \| d f^k (v) \| \geq a \alpha^k \|v\| $ for all $v \in E^u, k \in \N$.
\item  $E^s$ is uniformly contracting, i.e. there exist $\beta > 1$ and $b > 0$ such that $ \| d f^{-k} (u) \| \geq b \beta^{k} \|u\| $ for all $u \in E^s, k \in \N$.
\item $E^u$ dominates $E^c$, and $E^c$ dominates $E^s$.
\end{enumerate}

If $f$ is a partially hyperbolic symplectomorphism, then one can choose the splitting so that $rank(E^u) = rank(E^s)$ (see \cite[Lemma 8]{4}).

Let $Symp_1(M)$ be the set of all $C^1$ symplectomorphisms, and
\[Symp(M) \subset Symp_1(M)\]
 be the set of all $C^\infty$ symplectomorphisms.
A result by Saghin and Xia states that there exists a $C^1$-open and dense $\mathcal{U} \subset Symp_1(M)$ such that every $f \in \mathcal{U}$ is either partially hyperbolic or has an elliptic periodic point, see \cite{4}.
See also \cite{13} for a similar and independent result by Horita and Tahzibi.

Note that since $\mathcal{U}$ is open, $\mathcal{U} \cap Symp(M)$ is $C^1$-dense in $Symp(M)$. We get that in a $C^1$-dense subset, a $C^\infty$ symplectomorphism which is not partially hyperbolic has an elliptic periodic point. In the case where $H^1(M) = 0$, the result is also true for a $C^1$-dense subset of the set of Hamiltonian diffeomorphisms. This is a simple consequence of the fact that the group of symplectomorphisms is locally path connected and the subgroup of Hamiltonian diffeomorphisms is exactly the connected component of the identity.
Since $\pi : \tHam(M) \to \Ham(M)$ is open and continuous, we get the following
\begin{theorem}
\label{t8}
The set
\[\{ [\{f_t\}] \in \tHam(M) : \begin{matrix} f_1 \text{ is partially hyperbollic or } \\ f_1 \text{ has an elliptic periodic point} \end{matrix}\}\]
is $C^1$-dense in $\tHam(M)$.

\end{theorem}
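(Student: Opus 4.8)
The plan is to propagate the Saghin--Xia dichotomy down a chain of subspaces---from all $C^1$ symplectomorphisms to smooth symplectomorphisms, then to Hamiltonian diffeomorphisms, and finally up to the universal cover---using nothing beyond the fact that a dense subset meets every nonempty open set, together with the openness of a few natural maps. Each of the first two descents is already recalled in the text preceding the statement; I restate them here so that the whole argument is self-contained.

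First I would record the input of Saghin and Xia: there is a $C^1$-open and $C^1$-dense subset $\mathcal{U} \subset Symp_1(M)$, every element of which is partially hyperbolic or has an elliptic periodic point. To see that $\mathcal{U} \cap Symp(M)$ is $C^1$-dense in $Symp(M)$, fix $g \in Symp(M)$ and a $C^1$-neighborhood $V$ of $g$ in $Symp_1(M)$: density of $\mathcal{U}$ makes $\mathcal{U} \cap V$ a nonempty open set, and since smooth symplectomorphisms are $C^1$-dense in $Symp_1(M)$, this open set contains a point of $Symp(M)$, which then lies in $\mathcal{U} \cap Symp(M) \cap V$. Here it is the $C^1$-\emph{openness} of $\mathcal{U}$, not merely its density, that is being used; this is the one hypothesis of Saghin--Xia that must be handled with care.

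Next I would descend to $\Ham(M)$. Since $H^1(M) = 0$ the flux homomorphism vanishes, so $\Ham(M)$ is precisely the identity component of $Symp(M)$; because $Symp(M)$ is locally path-connected, that component is open in $Symp(M)$. Hence any $C^1$-neighborhood of $g \in \Ham(M)$ taken relative to $\Ham(M)$ is already open in $Symp(M)$, so it meets the dense set $\mathcal{U} \cap Symp(M)$, and the resulting point lies in $\mathcal{U} \cap \Ham(M)$. Thus $\mathcal{U} \cap \Ham(M)$ is $C^1$-dense in $\Ham(M)$, and each of its elements is partially hyperbolic or has an elliptic periodic point.

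Finally I would lift to $\tHam(M)$, using that the covering projection $\pi : \tHam(M) \to \Ham(M)$ is open and continuous for the $C^1$ topologies (on $\tHam(M)$ this is the topology generated by the sets $\phi \tilde{U}_i$ fixed above). Given $\phi = [\{f_t\}] \in \tHam(M)$ and a $C^1$-neighborhood $W$ of $\phi$, the set $\pi(W)$ is an open neighborhood of $f_1$ in $\Ham(M)$, so it contains some $g \in \mathcal{U} \cap \Ham(M)$; any $\psi \in W$ with $\pi(\psi) = g$ then has time-one map $g$, which is partially hyperbolic or has an elliptic periodic point, so $\psi$ belongs to the set in the statement. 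As $W$ was an arbitrary $C^1$-neighborhood, this set is $C^1$-dense in $\tHam(M)$. The whole argument is elementary point-set topology once Saghin--Xia is in hand; the only step requiring real attention is the first one, where the smoothing passage must exploit the openness of $\mathcal{U}$.
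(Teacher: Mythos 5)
Your argument is correct and follows the same route the paper sketches in the paragraphs preceding Theorem \ref{t8}: invoke Saghin--Xia to get a $C^1$-open and dense $\mathcal{U} \subset Symp_1(M)$, intersect with $Symp(M)$ (using openness of $\mathcal{U}$ plus $C^1$-density of smooth symplectomorphisms), restrict to $\Ham(M)$ as the identity component (open since $Symp(M)$ is locally path-connected and $H^1(M)=0$), and lift through the open, continuous projection $\pi:\tHam(M)\to\Ham(M)$. You merely spell out the point-set details that the paper leaves implicit.
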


\subsection{Asymptotic spectral invariants}
\begin{definition}
Let $(U^{2n},\omega)$ be an open symplectic manifold. Let $\phi \in \tHam(U)$ be an element such that there is a representative generated by a compactly supported Hamiltonian function $\{F_t\}_{t \in [0,1]}$. We define the \begin{it} Calabi invariant\end{it} of $\phi$ to be
\[ \Cal (\phi) = \int_0^1 \int_U F_t \omega^n \; dt. \]

\end{definition}

It is known that the Calabi invariant is well defined and it defines a homomorphism from $\tHam(U)$ to $\R$ (see \cite{1}).

\begin{definition}
 A function $c:\tHam(M) \to \R$ is called a \begin{it}subadditive spectral invariant\end{it} if
  \begin{enumerate}
  \item (conjugation invariance) $\forall \phi, \psi \in \tHam(M), c(\phi \psi \phi^{-1}) = c(\psi)$.
  \item (subadditivity) $c(\phi \psi) \leq c(\phi) + c(\psi)$
  \item (stability) $\int_0^1 \min(F_t - G_t) dt \leq c(\phi) - c(\psi) \leq \int_0^1 \max(F_t - G_t) dt ,$
  where $\phi$ and $\psi$ have representatives that are generated by compactly supported Hamiltonian functions $F$ and $G$ respectively.
  \item (spectrality) $c(\phi) \in spec(\phi)$ for all non-degenerate $\phi \in \tHam(M)$.

  \end{enumerate}
\end{definition}

Recall that an element $[\{f_t\}] \in \tHam(M)$ is called non-degenerate if the graph of $f_1$ in $M \times M$ is transverse to the diagonal. The action spectrum $spec([\{f_t\}])$ is the set of all actions $A_F (y, D)$, where $F$ is a compactly supported Hamiltonian that generates $\{f_t\}$ and $y$ is a fixed point of $f_1$.

It is known that for every closed symplectic manifold there exists a subadditive spectral invariant.

For a subadditive spectral invariant $c$ we can define the asymptotic spectral invariant as
\[ \sigma(\phi) = \lim_{k \to \infty} \frac {c(\phi^k)}{k}.\]
Every asymptotic spectral invariant is homogeneous and the stability property holds.
For an open displaceable set $U \subset M$, and an element $\phi \in \tHam(M)$ supported in $U$ we have
\[ \sigma(\phi) = -V^{-1} \cdot \Cal(\phi) ,\]
where $V = \int_M \omega^n$ and $\Cal(\phi)$ is the Calabi invariant of $\phi$ if we regard it as an element of $\tHam(U)$.

Denote
\[I(\phi,\psi):= | \sigma(\phi \psi) - \sigma(\phi)-\sigma(\psi) |.\]
It is known that
\[I \leq \min(q(\phi),q(\psi)),\]
where $q(\phi) = c(\phi) + c(\phi^{-1})$.
It is also known that for a displaceable set $U$, one has
\[ \sup q(\phi) < \infty,\] where the supremum runs over all $\phi \in \tHam(M)$ supported in $U$.
We denote this value by
\[w(U) = \sup q(\phi).\]

For the proofs of these facts and for a further discussion on spectral invariants see \cite{3}.

\begin{proposition}
\label{p2}
Let $\phi, \psi \in \tHam(M)$. Assume that $\phi \psi = \psi \phi$ and that $\phi$ is supported in a displaceable set $U \subset M$.
Then $I(\phi,\psi) = 0$.

\end{proposition}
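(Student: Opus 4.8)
The strategy is to exploit commutativity together with the subadditivity and homogeneity of $\sigma$ to get a two-sided bound, and then to make that bound arbitrarily small using the displaceability of $U$ together with the scaling behavior of the defect $I$.

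**Step 1: reduce to a bound involving $q$.** Since $\phi$ and $\psi$ commute, so do $\phi^k$ and $\psi^k$, and in fact $(\phi\psi)^k = \phi^k \psi^k$ for all $k \in \N$. Applying the estimate $I(\eta,\zeta) \le \min(q(\eta),q(\zeta))$ quoted in the excerpt to the pair $\eta = \phi^k$, $\zeta = \psi^k$, and using that $\sigma$ is a homomorphism up to the defect $I$, I get
\[
|\sigma(\phi^k \psi^k) - \sigma(\phi^k) - \sigma(\psi^k)| = I(\phi^k,\psi^k) \le q(\phi^k).
\]
Now $\sigma$ is homogeneous, so $\sigma(\phi^k \psi^k) = \sigma((\phi\psi)^k) = k\,\sigma(\phi\psi)$, $\sigma(\phi^k) = k\,\sigma(\phi)$, and $\sigma(\psi^k) = k\,\sigma(\psi)$. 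Hence the left-hand side equals $k\cdot I(\phi,\psi)$, giving
\[
I(\phi,\psi) \le \frac{q(\phi^k)}{k}.
\]

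**Step 2: bound $q(\phi^k)$ uniformly.** Here is where displaceability enters. Since $\phi$ is supported in the displaceable set $U$, every power $\phi^k$ is also supported in $U$. Therefore, by the definition $w(U) = \sup q(\phi)$ over all elements supported in $U$, we have $q(\phi^k) \le w(U) < \infty$ for every $k$. Combining with Step 1,
\[
I(\phi,\psi) \le \frac{w(U)}{k} \qquad \text{for all } k \in \N.
\]
Letting $k \to \infty$ forces $I(\phi,\psi) = 0$, which is exactly the claim.

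**Main obstacle.** The only genuinely delicate point is justifying that the defect $I$ scales linearly, i.e. that $I(\phi^k,\psi^k) = k\, I(\phi,\psi)$; this relies on having $(\phi\psi)^k = \phi^k\psi^k$, which is precisely where the commutativity hypothesis is used, and on the exact homogeneity of $\sigma$ (not merely asymptotic subadditivity). Both are available from the properties of asymptotic spectral invariants recalled above, so the argument goes through cleanly. One should also note that $q(\phi^k) \ge 0$ for the chain of inequalities to be meaningful; this is immediate since $q(\eta) = c(\eta) + c(\eta^{-1}) \ge I(\eta,\eta^{-1}) \ge 0$ by the defect bound applied to $\eta$ and $\eta^{-1}$ (using $\sigma(\Id) = 0$), or more directly from subadditivity of $c$.
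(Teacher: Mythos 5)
Your proof is correct and follows essentially the same approach as the paper: both use commutativity to write $(\phi\psi)^k = \phi^k\psi^k$, homogeneity of $\sigma$ to reduce to the bound $I(\phi^k,\psi^k) \le q(\phi^k) \le w(U)$, and then take $k \to \infty$. The paper phrases the computation as a limit of $\sigma(\phi\psi) = \sigma(\phi) + \sigma(\psi) + C(k)/k$, while you isolate the scaling identity $I(\phi^k,\psi^k) = k\,I(\phi,\psi)$, but these are the same argument.
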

\begin{proof}
\[ \sigma(\phi \psi) = \frac {\sigma(\phi^k \psi^k) }{k} = \frac {k \sigma(\phi) + k \sigma(\psi) + C(k)} {k}, \]
where $C(k)$ is a constant depending on $k$ with $| C(k) | \leq w(U)$.
We get that
\[ \sigma(\phi \psi) = \sigma(\phi) + \sigma(\psi) + \frac{C(k)}{k} \xrightarrow{k \to \infty} \sigma(\phi) + \sigma(\psi). \]

\end{proof}

\section{Proof that $\chi$ is open}
Let $M$ be a closed symplectic manifold with $H^1(M) = 0$.

\begin{proposition}
\label{p1}
For $\phi \in \tHam(M)$, if $\sigma(\phi) \neq 0$ then $\{ \phi^n \}_{n \in \Z}$ is undistorted.
\end{proposition}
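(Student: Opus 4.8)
The plan is to combine the two properties of the asymptotic spectral invariant $\sigma$ recorded in Section~1.4: its homogeneity, $\sigma(\phi^n)=n\,\sigma(\phi)$, and the stability property. The idea is that stability makes $\sigma$ Lipschitz with respect to the Hofer metric, so $|\sigma(\phi)|\le d(\Id,\phi)$; feeding the powers $\phi^n$ into this and then using homogeneity yields a linear lower bound for $d(\Id,\phi^n)$.

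Concretely, I would first observe that homogeneity forces $\sigma(\Id)=0$ (apply $\sigma(\cdot^{\,2})=2\,\sigma(\cdot)$ to $\Id$). Specializing the stability inequality
\[
\int_0^1 \min_M (F_t-G_t)\,dt \;\le\; \sigma(\phi)-\sigma(\psi)\;\le\;\int_0^1 \max_M (F_t-G_t)\,dt
\]
to $\psi=\Id$, $G\equiv 0$, then gives, for any normalized Hamiltonian $F$ generating $\phi$,
\[
\int_0^1 \min_M F_t\,dt \;\le\; \sigma(\phi)\;\le\;\int_0^1 \max_M F_t\,dt ,
\]
whence $|\sigma(\phi)|\le \int_0^1 \max_M |F_t|\,dt$. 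Taking the infimum over all Hamiltonians whose flow lies in class $\phi$ produces the key estimate $|\sigma(\phi)|\le d(\Id,\phi)$.

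The conclusion is then immediate: applying this estimate to $\phi^n$ and using homogeneity,
\[
d(\Id,\phi^n)\;\ge\;|\sigma(\phi^n)|\;=\;n\,|\sigma(\phi)| ,
\]
so $d(\Id,\phi^n)/n\ge |\sigma(\phi)|$ for every $n$, and therefore $\displaystyle\lim_{n\to\infty} \frac{d(\Id,\phi^n)}{n}\ \ge\ |\sigma(\phi)|\ >\ 0$; that is, $\{\phi^n\}_{n\in\Z}$ is undistorted.

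I do not anticipate a genuine obstacle here — once $|\sigma(\phi)|\le d(\Id,\phi)$ is in hand the argument is two lines. The only point that deserves a word of care is that passage: the stability property is naturally formulated for normalized (zero mean) Hamiltonians, whereas the Hofer metric is an infimum over all generating Hamiltonians. Reconciling the two is routine — either one restricts to normalized representatives throughout, or one absorbs a universal factor of at most $2$ arising from $\max_M|\bar F_t|\le \max_M F_t-\min_M F_t\le 2\max_M|F_t|$, where $\bar F$ is the normalization of $F$ — and a universal multiplicative constant does not affect the sign of $\lim_{n\to\infty} d(\Id,\phi^n)/n$, so the statement goes through unchanged.
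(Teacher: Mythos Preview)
Your argument is correct and is essentially the same as the paper's: the paper simply cites the inequality $d(\Id,\phi)\ge |\sigma(\phi)|$ as known and then invokes homogeneity, whereas you additionally sketch why this inequality follows from stability. The extra discussion of normalization is fine but not needed for the conclusion.
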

\begin{proof}
It is known that (see \cite{3})
\[ d(\Id, \phi) \geq | \sigma(\phi) | .\]
The claim thus follows in view of the homogeneity of $\sigma$.
\end{proof}

Put
\[\chi = \{ \phi \in \tHam(M) : \sigma(\phi) \neq 0 \}.\]

\begin{theorem}
\label{t4}
The set $\chi \subset \tHam(M)$ is $C^1$-open.

\end{theorem}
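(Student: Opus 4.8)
The plan is to show that if $\sigma(\phi_0)\neq 0$ then $\sigma$ stays bounded away from $0$ on a whole $C^1$-neighborhood of $\phi_0$ in $\tHam(M)$. The key tool is the estimate $I(\phi,\psi)=|\sigma(\phi\psi)-\sigma(\phi)-\sigma(\psi)|\leq \min(q(\phi),q(\psi))$ together with the fact that $\sup q(\phi)=w(U)<\infty$ when $\phi$ is supported in a displaceable set $U$. Write $\psi=\phi\phi_0^{-1}$, so $\phi=\psi\phi_0$; then
\[
|\sigma(\phi)-\sigma(\phi_0)|\leq |\sigma(\psi)|+|\sigma(\psi\phi_0)-\sigma(\psi)-\sigma(\phi_0)|\leq |\sigma(\psi)|+q(\psi),
\]
and since $\sigma$ is homogeneous, $|\sigma(\psi)|\leq d(\Id,\psi)$... but that is too weak for $C^1$-control. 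Instead I would bound both $|\sigma(\psi)|$ and $q(\psi)$ by $w(U)$ once I know that $\psi=\phi\phi_0^{-1}$ is supported in a fixed displaceable set. This is exactly what a suitably small simply connected $C^1$-neighborhood $\tilde U_i$ of $\Id$ buys us: by choosing $\tilde U_i$ small enough, every $f\in U_i$ is $C^1$-close to $\Id$, hence (after a fixed Darboux-type localization, or just because $M$ is a manifold and we can cover it so that a small $C^1$-ball lies in an ``almost identity'' regime) its support... no — elements of $\Ham(M)$ need not have small support. So the genuinely correct route is different.

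The real mechanism is this: for $\psi$ in a small enough $C^1$-neighborhood of $\Id$, the quantity $q(\psi)=c(\psi)+c(\psi^{-1})$ is small. Indeed, the stability axiom gives $c(\psi)\leq \int_0^1\max F_t\,dt$ and $c(\psi^{-1})\leq\int_0^1\max(-F_t)\,dt=\int_0^1(-\min F_t)\,dt$ for any normalized Hamiltonian $F$ generating $\psi$, so $q(\psi)\leq\int_0^1\osc(F_t)\,dt$, and likewise $|\sigma(\psi)|\leq\int_0^1\osc(F_t)\,dt$ by stability of $\sigma$ applied against $\Id$. Hence it suffices to show: for every $\varepsilon>0$ there is a $C^1$-neighborhood $V$ of $\Id$ in $\Ham(M)$ such that every $g\in V$ is generated by some Hamiltonian $F$ with $\int_0^1\osc(F_t)\,dt<\varepsilon$. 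This is where the choice of a basis $\{U_i\}$ of \emph{simply connected} $C^1$-neighborhoods of $\Id$ enters: on such a $U_i$ one has a well-defined ``short'' generating path, and a $C^1$-small diffeomorphism admits a $C^1$-small generating Hamiltonian in a Darboux chart decomposition. Concretely, I would invoke local contractibility of $\Ham(M)$ (the cited \cite{1}) to write each $g\in U_i$ via a canonical isotopy whose generating Hamiltonian has $C^0$-norm controlled by the $C^1$-size of $U_i$; letting $\tilde U_i$ be the corresponding lift, every $\psi\in\tilde U_i$ then satisfies $q(\psi)<\varepsilon$ and $|\sigma(\psi)|<\varepsilon$.

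Granting that, the proof concludes quickly. Given $\phi_0\in\chi$, set $\varepsilon=\tfrac14|\sigma(\phi_0)|$ and pick $\tilde U_i$ as above. For any $\phi\in\phi_0\tilde U_i$ write $\psi=\phi_0^{-1}\phi\in\tilde U_i$, so $\phi=\phi_0\psi$ with $\phi_0\psi=\phi_0\psi$ and
\[
|\sigma(\phi)-\sigma(\phi_0)|\leq |\sigma(\psi)|+I(\phi_0,\psi)\leq |\sigma(\psi)|+q(\psi)<2\varepsilon=\tfrac12|\sigma(\phi_0)|,
\]
whence $|\sigma(\phi)|>\tfrac12|\sigma(\phi_0)|>0$, so $\phi\in\chi$. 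Thus $\phi_0\tilde U_i\subset\chi$, proving $\chi$ is $C^1$-open. The main obstacle is the middle step: producing, from $C^1$-smallness of the diffeomorphism alone, a generating Hamiltonian with small integrated oscillation. One must be careful that $C^1$-closeness to the identity does not by itself bound $C^0$-norm of a generating Hamiltonian on a general closed manifold without using the simply connected neighborhood structure and a partition-of-unity/Darboux argument; getting this localization right, uniformly over the neighborhood, is the crux.
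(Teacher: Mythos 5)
Your reduction to a neighborhood of $\Id$ is sound and essentially parallels the paper's: both arguments boil the claim down to a single analytic lemma, namely that every $\psi$ in a sufficiently small simply connected $C^1$-neighborhood $\tilde U_i$ of $\Id$ in $\tHam(M)$ admits a generating normalized Hamiltonian whose $C^0$-norm (equivalently, integrated oscillation) is arbitrarily small. Your route to the reduction is a little different — you combine $I(\phi_0,\psi)\leq q(\psi)$ with the bounds $q(\psi)\leq\int_0^1\osc(F_t)\,dt$ and $|\sigma(\psi)|\leq\int_0^1\osc(F_t)\,dt$ — whereas the paper goes through Theorem \ref{t6}, using the stability axiom for $\sigma$ together with bi-invariance of the Hofer metric to get $|\sigma(\phi)-\sigma(\phi_0)|\leq d(\Id,\phi_0^{-1}\phi)$ directly. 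Both are correct and lead to the same crux, so this difference is cosmetic rather than structural.

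The genuine gap is exactly where you flag it: you do not prove the crux, only gesture at a ``partition-of-unity/Darboux'' argument and correctly observe that making it uniform is delicate. This is in fact the entire content of the paper's proof of Theorem \ref{t6}, and it is not routine, because an arbitrary representative $\{g_t\}$ of $\psi\in\tilde U_i$ can have a Hamiltonian of large $C^0$-norm even when $g_1$ is $C^1$-tiny; one must choose a canonical short isotopy. The paper does this with the Weinstein Lagrangian tubular neighborhood $\Psi:N(\Delta)\to N(M_0)\subset T^*M$: for $\psi$ $C^1$-close to $\Id$ one has $\Psi(\mathrm{graph}(\psi))=\mathrm{graph}(dF)$ for a $C^1$-small function $F$, and one takes the linear path $\{f_t\}$ defined by $\Psi(\mathrm{graph}(f_t))=\mathrm{graph}(d(tF))$. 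Simple connectedness enters via Proposition \ref{p3}, which guarantees this path is a genuine representative of $\psi$, i.e.\ homotopic rel endpoints to the given one. One then differentiates the defining identities for $f_t$ to show $\|\partial_t f_t\|_{C^0}\to 0$ as $\psi\to\Id$ in $C^1$, and finally converts the bound on the Hamiltonian vector field into a bound on the normalized Hamiltonian itself using compactness of $M$, via a fixed constant $K$ with $|H_t(x)|\leq K\sup_y\|\mathrm{sgrad}\,H_t(y)\|$. Without carrying out some version of this construction, your argument does not close; with it, your proposal is a valid (if slightly roundabout) proof.
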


This is an easy consequence of the following.

\begin{theorem}
\label{t6}
The function $\sigma : \tHam(M) \to \R$ is $C^1$-continuous.
\end{theorem}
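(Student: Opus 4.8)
The plan is to show that $\sigma$ is $C^1$-continuous at an arbitrary $\phi \in \tHam(M)$ by reducing the estimate to a \emph{local} computation near the identity, where the stability property can be combined with the Calabi identity $\sigma(\phi) = -V^{-1}\Cal(\phi)$ for elements supported in a displaceable set. First I would recall that $\sigma$ is invariant under composition with elements supported in a displaceable set up to an error controlled by $w(U)$ (this is exactly the content of Proposition \ref{p2} and of the inequality $I \le \min(q(\phi),q(\psi))$); the point is that this error is a \emph{fixed} constant, while $\sigma$ itself is homogeneous, so after passing to large powers the error washes out. Concretely, if $\tilde\phi = \psi\phi$ with $\psi \in \tilde U_i$ for a small simply connected displaceable neighborhood $U_i$ of $\Id$, I want to bound $|\sigma(\tilde\phi) - \sigma(\phi)|$ in terms of a quantity that tends to $0$ as $U_i$ shrinks.

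The key steps, in order: (1) Fix $\phi$ and a small $C^1$-neighborhood, i.e. consider $\tilde\phi = \psi\phi$ with $\psi$ close to $\Id$ in $C^1$; by choosing the neighborhood basis $\{U_i\}$ small enough we may assume $U_i$ is displaceable (a sufficiently $C^1$-small Hamiltonian diffeomorphism is supported near the support of a small generating Hamiltonian, hence displaceable — or one reduces to the displaceable case by a standard covering/fragmentation argument). (2) Estimate $|\sigma(\tilde\phi) - \sigma(\phi)|$: writing $\tilde\phi^k = (\psi\phi)^k$ and commuting the $\psi$-factors past the $\phi$-factors introduces conjugates $\phi^{-j}\psi\phi^{j}$, each still supported in a displaceable (conjugated) set; using subadditivity and conjugation-invariance of $c$, together with the telescoping trick from Proposition \ref{p2}, one gets $|c(\tilde\phi^k) - c(\phi^k) - \sum_j \sigma(\text{conjugate of }\psi)| \le k\cdot(\text{const})$, but more carefully one should bound $|c(\tilde\phi^k) - c(\phi^k)|$ directly by $k\,|\sigma(\psi)| + O(1)$, whence dividing by $k$ and letting $k\to\infty$ gives $|\sigma(\tilde\phi) - \sigma(\phi)| \le |\sigma(\psi)|$. (3) Finally bound $|\sigma(\psi)|$: since $\psi$ is supported in the displaceable set $U_i$, $\sigma(\psi) = -V^{-1}\Cal(\psi) = -V^{-1}\int_0^1\int_{U_i} F_t\,\omega^n\,dt$ where $F$ is a small generating Hamiltonian; alternatively use the stability property directly, $|\sigma(\psi)| \le \int_0^1 \max|F_t|\,dt$. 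As $\psi \to \Id$ in $C^1$ one can choose generating Hamiltonians with $\int_0^1 \max|F_t|\,dt \to 0$ (this is where one uses that the $U_i$ form a basis of $C^1$-neighborhoods and that a $C^1$-small, simply connected Hamiltonian diffeomorphism is generated by a Hamiltonian with small Hofer norm — a standard but slightly delicate fact), so $|\sigma(\psi)| \to 0$ and $C^1$-continuity follows.

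The main obstacle I anticipate is step (3), namely controlling the Hofer norm (or the $L^\infty$-norm of a generating Hamiltonian) of a Hamiltonian diffeomorphism purely in terms of its $C^1$-size: $C^1$-smallness does not a priori give a $C^0$-small generating Hamiltonian, and one must exploit the structure of the chosen neighborhood basis $\{U_i\}$ — in particular that each $U_i$ is simply connected so that elements have well-defined lifts and can be connected to $\Id$ by short paths — to produce generating Hamiltonians whose Hofer length is small. A clean way around this is to fragment $\psi$ into pieces each supported in a Darboux chart and generated by a $C^1$-small (hence, on a fixed bounded domain, $C^0$-small) Hamiltonian, then use subadditivity of $c$ and of $\sigma$ together with the displaceability of each chart; the conjugation-invariance of $c$ ensures the error terms stay bounded by $\max_i w(U_i)$, a fixed finite constant, and the homogeneity of $\sigma$ kills it in the limit. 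A secondary technical point is making sure the commutation/telescoping argument in step (2) is carried out with the $\pi_1$-bookkeeping on $\tHam(M)$ correct, but since everything happens in a simply connected neighborhood $U_i$ this is routine.
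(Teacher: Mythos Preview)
Your reduction is circuitous, but you do eventually land on the right question: once you invoke stability (and bi-invariance of the Hofer metric), the entire theorem boils down to showing that an element $\psi \in \tHam(M)$ which is $C^1$-close to $\Id$ admits a generating Hamiltonian with small $L^\infty$-norm. You label this ``a standard but slightly delicate fact'' and move on --- but this \emph{is} the theorem. Everything before it (your steps (1) and (2)) is either unnecessary or incorrect, and step (3) itself is not proved.

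Concretely: your step (1) claim that a $C^1$-small Hamiltonian diffeomorphism is ``supported near the support of a small generating Hamiltonian, hence displaceable'' is false. A small rotation of $S^2$ is $C^1$-close to the identity but has support the whole sphere minus two points. A $C^1$-small diffeomorphism is generically supported on all of $M$, so the Calabi/displaceability machinery is simply unavailable here. Your step (2) telescoping argument, once the conjugation-invariance dust settles, yields at best $|\sigma(\psi\phi)-\sigma(\phi)| \le \max(c(\psi),c(\psi^{-1}))$, which is no sharper than what stability gives directly; the detour through Proposition~\ref{p2} buys nothing. Finally, the fragmentation workaround you sketch at the end would require quantitative control on the Hofer length of each fragment, and fragmentation lemmas do not come with such bounds for free; you would still need to prove, for each chart, exactly the local statement you are trying to avoid.

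The paper's argument supplies the missing ingredient directly. It uses the Weinstein tubular neighborhood $\Psi: N(\Delta) \to N(M_0) \subset T^*M$ of the diagonal: a $C^1$-small $\psi$ has $\Psi(\mathrm{graph}(\psi)) = \mathrm{graph}(dF)$ for some $C^1$-small $F:M\to\R$, and one takes as representative the isotopy $\{f_t\}$ whose graphs correspond to $d(tF)$. Any two such $C^1$-small paths with the same endpoints are homotopic rel endpoints (via the linear homotopy $d(sG_t)$), so this really does represent $\psi$ in $\tHam(M)$. Then one checks by differentiating the identities $\Id = \pi_1\circ\Psi^{-1}\circ d(tF)\circ h_t$ and $f_t = \pi_2\circ\Psi^{-1}\circ d(tF)\circ h_t$ that $\|\partial_t f_t\|_{C^0} \to 0$ as $\psi \to \Id$ in $C^1$; since $M$ is compact, the normalized generating Hamiltonian $H_t$ satisfies $|H_t| \le K\sup\|\mathrm{sgrad}\,H_t\| = K\sup\|\partial_t f_t\|$, and this is small. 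That is the actual content you need for step (3).
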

\begin{proof}
From the stability property of $\sigma$ together with the bi-invariance of the Hofer metric, we get that
it is enough to show that for every $\epsilon > 0$ if $\phi$ is $C^1$-close enough to the identity then there exists a Hamiltonian function $H$ that generates a representative such that
\[ \max(|H_t|) < \epsilon \]
for each $t$.

Let us recall some facts about symplectomorphisms which are $C^1$-close to the identity.
Let $\Delta \subset (M \times M, -\omega \oplus \omega)$ be the diagonal.
There is a symplectomorphism $\Psi$ from a neighborhood of the diagonal
\[ N(\Delta) \subset M \times M\]
to a neighborhood of the zero section
\[N(M_0) \subset T^*M\]
with the symplectic form $d\lambda_{can}$ defined on $T^*M$.
For a $C^1$-small Hamiltonian diffeomorphism $f$, the image $\Psi(graph(f))$ is the graph of an exact 1-form $dF$ (see \cite{1}).

For a smooth path of exact 1-forms $dG_t$ such that for each $t$, $dG_t$ is close enough to the zero section and $G_0 = 0$, there exist a Hamiltonian isotopy $\{g_t\}$ such that $graph(g_t) = \Psi^{-1}(graph(dG_t))$.
In addition, every loop of exact 1-forms $dG_t$ is homotopic to the zero section by the homotopy
\[ \{d(s \cdot G_t)\}_{s \in [0,1]}.\]
This proves the following
\begin{proposition}
\label{p3}
Let $f_t$ and $g_t$ be two paths of Hamiltonian diffeomorphisms with $f_0=g_0=\Id$ and $f_1=g_1$ that are $C^1$-close enough to the identity. Then they are homotopic with fixed end points.
\end{proposition}

Let $\{g_t\}$ be a representative of $\phi \in \tHam(M)$ which is $C^1$-close to $\Id$.
We get that $\Psi(graph(g_t)) = graph(dG_t)$ for some $G_t:M \to \R$.
Denote $F = G_1$. There is a Hamiltonian isotopy $\{f_t\}$, such that $\Psi(graph(f_t)) = graph(d(t \cdot F))$.
From Proposition \ref{p3}, we get that $\{f_t\}$ is a representative of $\phi$.

\textbf{Claim:} $\| \frac{\partial f_t}{\partial t} \| \to 0$ in $C^0$ as $\{ f_t \} \to \Id$ in $C^1$.

Before proving the claim, we use it to complete the proof of Theorem \ref{t6}:  By the claim, there exists a normalized Hamiltonian $H$ that generates $\{ f_t \}$  which is a representative of $\phi$, such that $\| sgrad H_t \|$ is arbitrarily small. Since $M$ is compact, there exists a constant $K$ such that
\[ | H_t(x) | < K \cdot \sup_{y \in M} \| sgrad H_t(y) \| \qquad \text{for all } x \in M \text{ and } t \in [0,1]. \]
This proves Theorem \ref{t6}.

\begin{it}Proof of the claim. \end{it}
Since $\Psi(graph(f_t)) = graph(d(t F))$, there exists a path of diffeomorphisms $h_t:M \to M$ such that
\[\Psi \circ gr_{f_t} = d(t \cdot F) \circ h_t ,\]
where $gr_{f_t}: M \to M \times M$ is defined by
\[gr_{f_t}(x) = (x, f_t(x)).\]
Denote by $\pi_1,\pi_2 : M \times M \to M$ the projections to the first and second factor respectively.
We get that
\[ \Id = \pi_1 \circ \Psi^{-1} \circ d(tF) \circ h_t, \]
\[ f_t = \pi_2 \circ \Psi^{-1} \circ d(tF) \circ h_t .\]
By differentiating both identities in $t$, one sees that $\| \frac{\partial f_t}{\partial t} \|$ is the sum of terms that tend to $0$ in $C^0$ as $\{f_t\} \to \Id$ in $C^1$ and $F \to 0$ in $C^0$. (Note that for $v \in T graph(f_t)$, $\frac{\|\pi_{1*}v\| - \|\pi_{2*}v\|}{\|v\|}$ is arbitrarily small.)

\end{proof}

\section{Obstruction to the existence of a partially hyperbolic symplectomorphism}

The next theorem provides an obstruction to the existence of a partially hyperbolic symplectomorphism. It will enable us to give examples of manifolds that do not admit partially hyperbolic symplectomorphisms.
From Theorem \ref{t8} we get that for these manifolds the set of elements whose time-1-map have elliptic periodic points is $C^1$-dense in $\tHam(M)$.

The idea of the obstruction has been announced by Bennequin (oral communication) and is presented in \cite{6}.

\begin{theorem}
\label{t1}
Let $M^{2n}$ be a closed symplectic manifold with non vanishing top Chern class, and $f \in Symp(M)$ a partially hyperbolic Hamiltonian diffeomorphism. Then there exists an even factorization $c(M) = \alpha \beta$ of the full Chern class of $M$.
\end{theorem}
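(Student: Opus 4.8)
The plan is to derive the even factorization directly from the invariant splitting furnished by partial hyperbolicity, by first observing that this splitting is compatible with $\omega$ and that one of its pieces is a complexification, and then reading off the factorization from the Whitney product formula together with the non-vanishing of $c_n(M)$. Concretely, by \cite[Lemma 8]{4} I may choose the invariant splitting $TM = E^u \oplus E^c \oplus E^s$ so that $\operatorname{rank} E^u = \operatorname{rank} E^s =: k$; since at least two of the three bundles are non-trivial and the stable and unstable ranks agree, both $E^u$ and $E^s$ are non-trivial, so $k \ge 1$. I would then reproduce Bennequin's observations from \cite[pp. 137--138]{6}: invariance of $\omega$ together with the fact that $f^{-N}$ uniformly contracts $E^u$ while $f^{N}$ uniformly contracts $E^s$ forces $\omega|_{E^u} = \omega|_{E^s} = 0$, and the domination relations force $\omega(E^c, E^u \oplus E^s) = 0$. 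Consequently $E^u$ and $E^s$ are $\omega$-isotropic of equal rank, $V := E^u \oplus E^s$ is a symplectic subbundle in which $E^u$ and $E^s$ are Lagrangian subbundles, $E^c$ is the symplectic complement of $V$, and $TM = V \oplus E^c$ is a symplectically orthogonal splitting. Establishing these structural facts --- especially the relation $\omega(E^c, E^u \oplus E^s) = 0$, which rests on the domination estimates --- is the main technical step.

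Choosing an $\omega$-compatible almost complex structure $J$ preserving this splitting turns $TM = V \oplus E^c$ into a direct sum of complex vector bundles, so $c(M) = c(V)\, c(E^c)$ by the Whitney formula. For any such $J$ the subbundle $JE^u$ is a complement to $E^u$ inside $V$, which yields an isomorphism of complex vector bundles $E^u \otimes_{\R} \C \cong V$. I would put $\alpha := c(V)$ and $\beta := c(E^c)$, which are integral classes with $c(M) = \alpha\beta$. Since the complex conjugate bundle of $E^u \otimes_{\R} \C$ is isomorphic to $E^u \otimes_{\R} \C$ itself, its odd Chern classes are $2$-torsion and hence vanish in $H^*(M) = H^*(M;\Q)$; thus $\alpha$ has only terms of even degree.

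For the degree bounds, $\operatorname{rank}_{\C} V = k$ and $\operatorname{rank}_{\C} E^c = n - k$ give $\deg\alpha \le k$ and $\deg\beta \le n - k$, so $\deg\alpha + \deg\beta \le n$. The degree-$2n$ component of $c(M) = \alpha\beta$ equals $\sum_{i+j=n} c_i(V)\, c_j(E^c)$, and every summand vanishes except possibly $c_k(V)\, c_{n-k}(E^c)$, which therefore equals $c_n(M) \neq 0$. Hence $c_k(V) \neq 0$, so $\deg\alpha = k \ge 1$ --- in fact $k$ must be even, since otherwise $c_k(V)$ would be $2$-torsion --- and likewise $c_{n-k}(E^c) \neq 0$. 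Thus $c(M) = \alpha\beta$ is an even factorization as soon as $\deg\alpha = k < n$.

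A further point requiring separate treatment is the case $k = n$, equivalently $E^c = 0$, i.e. $f$ Anosov, where the factorization constructed above has $\deg\alpha = n$. One must show this case does not occur under the hypothesis $c_n(M) \neq 0$. For $n$ odd it is immediate: $E^c = 0$ forces $TM \cong E^u \otimes_{\R} \C$, so $c_n(M) = c_n(E^u \otimes_{\R} \C)$ is $2$-torsion, contradicting $c_n(M) \neq 0$. For $n$ even one appeals, following \cite{6}, to the fact that a closed symplectic manifold with non-vanishing top Chern class admits no Anosov symplectomorphism; with this case excluded, the construction above always yields an even factorization, completing the proof.
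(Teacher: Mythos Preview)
Your argument is essentially correct and follows the same Bennequin idea as the paper, but you take a slightly heavier route in two places.

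First, the paper does \emph{not} use the full symplectic orthogonality $\omega(E^c, E^u\oplus E^s)=0$. It only proves that $E^s$ is isotropic (the two-line computation you also give), and then applies Theorem~\ref{t2} with $L=E^s$: for \emph{any} compatible $J$ one forms $L\oplus JL\cong L\otimes_\R\C$ and sets $\alpha=c(L\oplus JL)$, $\beta=c\bigl(TM/(L\oplus JL)\bigr)$. This bypasses entirely the question of whether $E^c$ is the symplectic complement of $E^u\oplus E^s$; that fact is true, but it genuinely needs the domination estimates and is not the one-line consequence of uniform contraction that isotropy of $E^s$ is. Your choice $V=E^u\oplus E^s$ gives the same Chern-class factorization (both are complexifications of a rank-$k$ real bundle), so nothing is gained by insisting on the dynamical complement rather than the almost-complex one.

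Second, your exclusion of the Anosov case $k=n$ is more convoluted than necessary and, for even $n$, not fully justified: the assertion that a closed symplectic manifold with $c_n\neq 0$ admits no Anosov \emph{symplectomorphism} is not obviously in \cite{6} in that generality. The paper instead uses the hypothesis you never invoke, namely that $f$ is a \emph{Hamiltonian} diffeomorphism and hence isotopic to the identity; then Shiraiwa \cite{10} gives $\operatorname{rank}E^c>0$ directly, with no parity split. Once you use this, your separate odd-$n$ argument (nice as it is) becomes unnecessary.
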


\begin{theorem}
\label{t2}
Let $M^{2n}$ be a closed symplectic manifold and suppose that there is an isotropic subbundle L, i.e. $L \subset L^{\omega}$, and $rank(L) =i$.
Then there exists a factorization of the full Chern class $c(M) = \alpha \beta$ where $\alpha$ and $\beta$ are integral classes, $\alpha$ has only terms of even degree, $\deg(\alpha) \leq i$ and $\deg(\beta) \leq n-i$.
\end{theorem}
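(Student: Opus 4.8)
The plan is to use an $\omega$-compatible almost complex structure to regard $TM$ as a complex vector bundle, to build from $L$ a complex subbundle of complex rank $i$, and then to read off the factorization directly from the Whitney sum formula.

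First I would fix an $\omega$-compatible almost complex structure $J$ on $TM$, with associated Riemannian metric $g(\cdot,\cdot) = \omega(\cdot, J \cdot)$, so that $c(M)$ is the total Chern class of the complex bundle $(TM,J)$. The one geometric input needed is that an isotropic subbundle is automatically totally real: if some nonzero $x \in L_p \cap J L_p$ existed, then writing $x = Jy$ with $y \in L_p$ would give $g(y,y) = \omega(y, Jy) = \omega(y,x) = 0$, because $y$ and $x$ both lie in the isotropic fiber $L_p$; hence $y = 0$ and $x = 0$, a contradiction. Therefore $L \cap JL = 0$, and $V := L + JL$ is a $J$-invariant smooth subbundle of $TM$ of real rank $2i$, i.e. a complex subbundle of complex rank $i$. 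Moreover the assignment $v \otimes 1 \mapsto v$, $v \otimes \sqrt{-1} \mapsto Jv$ identifies $V$ with the complexification $L \otimes_{\R} \C$ of the real bundle $L$.

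Next I would put $\alpha := c(V)$ and $\beta := c(TM/V)$, where $TM/V$ is the quotient complex bundle, of complex rank $n-i$. Since $V$ has complex rank $i$ and $TM/V$ has complex rank $n-i$, we get $\deg(\alpha) \le i$ and $\deg(\beta) \le n-i$; both classes are integral because Chern classes are integral by definition. The Whitney sum formula applied to $0 \to V \to TM \to TM/V \to 0$ then yields $c(M) = c(V)\, c(TM/V) = \alpha\beta$. It remains only to verify that $\alpha$ has nonzero components in even degrees alone, i.e. that $c_k(V) = 0$ for every odd $k$. For this I would invoke complex conjugation on the $\C$-factor, which gives a complex bundle isomorphism $\overline{L \otimes_{\R} \C} \cong L \otimes_{\R} \C \cong V$; hence $c_k(V) = c_k(\overline{V}) = (-1)^k c_k(V)$ in $H^{2k}(M)$, and since $H^{2k}(M) = H^{2k}(M;\Q)$ this forces $c_k(V) = 0$ whenever $k$ is odd.

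I do not anticipate a genuine obstacle here: the argument is essentially linear algebra together with the Whitney formula. The only point that needs a little care is the compatibility bookkeeping producing $L \cap JL = 0$ along with the identification $V \cong L \otimes_{\R} \C$, since it is precisely this identification (combined with the passage to rational coefficients) that kills the odd Chern classes of $V$ and hence supplies the ``even'' half of the required factorization.
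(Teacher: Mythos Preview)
Your argument is correct and matches the paper's proof essentially line for line: both pick a compatible $J$, set $V = L \oplus JL \cong L \otimes_{\R}\C$, take $\alpha = c(V)$ and $\beta = c(TM/V)$, and then kill the odd Chern classes of $V$ over $\Q$. The only cosmetic difference is that you spell out the totally real check $L \cap JL = 0$ and the conjugation identity $c_k(\overline V)=(-1)^k c_k(V)$ yourself, whereas the paper simply cites \cite[Chapter 15]{7} for the fact that the odd Chern classes of a complexified real bundle are $2$-torsion.
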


\begin{proof}[Proof that Theorem \ref{t2} implies Theorem \ref{t1}]
There exists a constant $Q > 0$ such that for all $v_1,v_2 \in TM$,
\[\omega(v_1,v_2) \leq Q \| v_1 \| \| v_2 \|.\]

For $x \in M$ and $u_1,u_2 \in E^s_x$,
\[ | \omega(u_1,u_2) | = | \omega(d_x f^k(u_1),d_x f^k(u_2)) | \leq \]
\[ \leq Q \| d_x f^k (u_1) \| \| d_x f^k (u_2) \| \leq b^2 \beta^{-2k} Q \|u_1\| \|u_2 \| \xrightarrow{k \to \infty} 0.\]
We get that $\omega(u_1,u_2) = 0$, so $E^s \subset (E^s)^ \omega$.
Hence $E^s$ is an isotropic subbundle, and so there exists a factorization $c(M) = \alpha \beta$, where $\alpha$ and $\beta$ are integral classes and $\alpha$ has only terms of even degree. Let us prove that $\deg(\alpha)$ cannot be zero or $n$. This will show that $c(M) = \alpha \beta$ is an even factoriztion.

Since $f$ is symplectic, we can assume that $rank(E^u) = rank(E^s) > 0$.
Denote $rank(E^s) = i$.
Note that on the one hand $\deg(c(M)) = n$ because $c_n(M) \neq 0$.
On the other hand,
\[ \deg(c(M)) \leq \deg(\alpha) + \deg(\beta) \leq i + (n-i) = n.\]
Hence all inequalities are actually equalities and $\deg(\alpha) = i > 0$.

If $\deg(\alpha) = n$, we get that $rank(E^s) = rank(E^u) = n$.
Hence $rank(E^c) = 0$.
On the other hand $f$ is isotopic to $\Id$, so $rank(E^c) > 0$ (see \cite{10}) and this is a contradiction.
We get that $0 < \deg(\alpha) < n$.
This completes the proof.

\end{proof}

\begin{proof}[Proof of Theorem \ref{t2}]

Let $J$ be a compatible almost complex structure.
The subbundle $L \oplus JL$ is symplectic and $L,JL$ are Lagrangian subbundles of $L \oplus JL$.
The subbundle $L \oplus JL$ is also isomorphic to the complexification of $L$.
From \cite[chapter 15]{7} we get that in $H^*(M;\Z)$ the odd Chern classes of $L \oplus JL$ are of order 2.
Hence when passing to rational coefficients the odd Chern classes vanish. We get that $c(L \oplus JL)$ has only terms of even degree.
We can write
\[TM = (L \oplus JL) \oplus (TM / (L \oplus JL)), \]
where the subbundle $(TM / (L \oplus JL))$ is also symplectic.
Put $\alpha = c(L \oplus JL)$ and $\beta = c(TM / (L \oplus JL))$.
Recall that Chern classes are integral classes.
This completes the proof.

\end{proof}

\section{$C^1$-Generic elements generate undistorted cyclic subgroups}

Let $M$ be a closed symplectic manifold with $H^1(M) = 0$, and let $\sigma$ be an asymptotic spectral invariant.

The following theorem shows that if the manifold has the property that a $C^1$-generic Hamiltonian diffeomorphism has an elliptic periodic point, then the set $\chi$ is $C^1$-dense in $\tHam(M)$.

\begin{theorem}
\label{t3}

Let $M$ be a closed symplectic manifold with $H^1(M) = 0$, and $\phi \in \tHam(M)$ such that its time-1-map has an elliptic periodic point.
Then for every $C^1$-open neighborhood $\mathcal{U} \subset \tHam(M)$ of $\phi$, there exists $\psi \in \mathcal{U}$ such that $\sigma(\psi) \neq 0$.

\end{theorem}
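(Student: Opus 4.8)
The plan is to perturb $\phi$ near its elliptic orbit so that the new time-one map has a genuine \emph{periodic ball}, to insert into that ball a Hamiltonian with non-zero Calabi invariant, and to deduce $\sigma(\psi)\ne 0$ for the resulting element $\psi$ by passing to a high power of $\psi$ and exploiting the identity $\sigma=-V^{-1}\Cal$ on displaceable sets together with Proposition \ref{p2}.

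\textbf{Step 1: the periodic ball and the insertion.} Let $p$ be the elliptic $l$-periodic point of the time-one map $f$ of a representative of $\phi$, and take symplectic coordinates near $p$ in which $d_pf^l$ is a product of plane rotations; then $f^l$ carries a small round ball $D$ centered at $p$ to a domain $C^1$-close to $D$. Following the construction of Bonatti, Crovisier, Vago and Wilkinson, and using $H^1(M)=0$ to keep the modification Hamiltonian (not merely symplectic), I perturb $\phi$ to $\phi_2$, $C^1$-close to $\phi$, whose time-one map $f_2$ agrees with $f$ outside tiny balls around $p,f(p),\dots,f^{l-1}(p)$ and satisfies $f_2^{l}(D)=D$ setwise; one may even arrange $f_2^l|_D$ to be the linear rotation and later choose the inserted Hamiltonian invariant under it. Since $p,f(p),\dots,f^{l-1}(p)$ are distinct, for $D$ small the sets $D,f_2(D),\dots,f_2^{l-1}(D)$ are pairwise disjoint, and since $f_2^l(D)=D$ the entire $f_2$-orbit of $D$ consists of just these $l$ balls; shrinking $D$ once more, their union $\mathcal D$ sits inside a disjoint union of Darboux balls and is therefore displaceable. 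Now fix a $C^1$-small autonomous Hamiltonian $H$ supported in $D$ with $c:=\int_D H\,\omega^n\ne 0$, let $\tilde h\in\tHam(M)$ be generated by $H$, and put $\psi:=\tilde h\,\phi_2$; for $\|H\|_{C^2}$ small and $\phi_2$ close enough to $\phi$, the element $\psi$ lies in $\mathcal U$.

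\textbf{Step 2: evaluating $\sigma(\psi)$.} For $P$ a multiple of $l$ we have the identity $\psi^{P}=\Theta\cdot\phi_2^{\,P}$ with $\Theta=\prod_{j=0}^{P-1}\phi_2^{\,j}\tilde h\,\phi_2^{-j}$; the $j$-th factor has time-one map supported in $f_2^{\,j}(D)\subset\mathcal D$, so $\Theta$ is supported in the displaceable set $\mathcal D$. Because $f_2^l(D)=D$, the factors of $\Theta$ split into $l$ groups according to which ball $f_2^{\,r}(D)$ carries their support; factors in different groups commute, and — having arranged $H$ invariant under $f_2^l|_D$ — so do factors in the same group. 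Applying Proposition \ref{p2} repeatedly, using the conjugation invariance of $\sigma$, and using $\sigma(\tilde h)=-V^{-1}\Cal_D(\tilde h)=-V^{-1}c$ (valid since $D$ is displaceable), we obtain $\sigma(\Theta)=\sum_{j=0}^{P-1}\sigma(\phi_2^{\,j}\tilde h\,\phi_2^{-j})=-PV^{-1}c$. On the other hand $\bigl|\sigma(\psi^{P})-\sigma(\Theta)-\sigma(\phi_2^{\,P})\bigr|=I(\Theta,\phi_2^{\,P})\le q(\Theta)\le w(\mathcal D)$, while $\sigma(\psi^{P})=P\sigma(\psi)$ and $\sigma(\phi_2^{\,P})=P\sigma(\phi_2)$ by homogeneity. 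Dividing by $P$ and letting $P\to\infty$ through multiples of $l$ gives $\sigma(\psi)=\sigma(\phi_2)-V^{-1}c$. Since $c$ ranges over an interval about $0$ while $\sigma(\phi_2)$ is a fixed number, we may choose $c\ne 0$ with $c\ne V\sigma(\phi_2)$, and then $\sigma(\psi)\ne 0$.

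\textbf{The main obstacle} is Step 1: producing, by a perturbation that is simultaneously $C^1$-small and \emph{Hamiltonian}, a genuine periodic ball for an iterate of $f$. The cut-offs needed to annihilate the higher-order part of $f^l$ near $p$ (or to linearize the return map) force one to work on a ball far smaller than the support of the perturbation, and the perturbation must stay within $\tHam(M)$; this is exactly the place where the techniques of Bonatti, Crovisier, Vago and Wilkinson enter. A second, more routine, technical point is the bookkeeping with representatives and $\pi_1(\Ham)$-loops in $\tHam(M)$ behind the commutation claims and the computation $\sigma(\Theta)=-PV^{-1}c$ of Step 2; this is controlled by the fact that all auxiliary loops arising there are supported in the displaceable set $\mathcal D$.
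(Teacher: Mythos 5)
Your proposal is correct and follows the same overall strategy as the paper: use the Bonatti--Crovisier--Vago--Wilkinson local perturbation near the elliptic orbit (kept Hamiltonian thanks to $H^1(M)=0$; this is exactly what the paper's Lemma \ref{l1} delivers), insert a Hamiltonian with nonzero Calabi invariant in a displaceable region that the perturbed flow permutes, and compute $\sigma$. The two arguments diverge in the mechanics of the $\sigma$-computation. The paper makes the local return map a linear rotation with \emph{rational} angles, so that some power $g^k$ is the identity on a small ball; it then picks a sub-ball $B$ of exact period $k$, builds a single perturbation $h^\epsilon$ supported on the whole displaceable orbit $\bigcup_j g^j(B)$, and shows this $h^\epsilon$ genuinely commutes with $g$ in $\tHam(M)$ (the $\pi_1$-issue handled by Proposition \ref{p3}); then $\sigma$ of the product is additive by a direct appeal to Proposition \ref{p2}, with the case $\sigma(g)\ne 0$ treated separately. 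You keep the rotation arbitrary, support $\tilde h$ in a single invariant ball $D$, and choose $H$ rotation-invariant so that the conjugates $\phi_2^j\tilde h\phi_2^{-j}$ depend only on $j$ mod $l$; since $\tilde h$ does not commute with $\phi_2$, you compensate by the telescoping $\psi^P=\Theta\phi_2^P$, compute $\sigma(\Theta)$ from the commuting factors via Proposition \ref{p2}, and pass to $P\to\infty$ using $I(\Theta,\phi_2^P)\le w(\mathcal{D})$ --- in effect re-running the proof mechanism of Proposition \ref{p2} rather than quoting it once. Your affine relation $\sigma(\psi)=\sigma(\phi_2)-V^{-1}c$ in the Calabi parameter then cleanly replaces the paper's case split. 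Both routes are sound; the paper's is a touch shorter once Lemma \ref{l1} is in hand, while yours avoids the rational-angle device and the construction of a perturbation that commutes with the whole flow, at the cost of repeating the asymptotic argument behind Proposition \ref{p2}.
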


\begin{proof}[Proof of Theorem \ref{t5}]
Theorem \ref{t4} and Theorem \ref{t3} show that if a $C^1$-generic element has the property that its time-1-map has an elliptic periodic point then the set $\chi$ is $C^1$-open and dense in $\tHam(M)$. From Proposition \ref{p1} we get that
\[ \chi \subset  \{ \phi  \in \tHam(M) : \{\phi^n\}_{n \in \Z} \text{ is undistorted} \} .\]
Finally, from Theorem \ref{t1} we get that if $c_n(M) \neq 0$ and there is no even factorization then there are no partially hyperbolic symplectomorphisms and hence, by Theorem \ref{t8}, a $C^1$-generic Hamiltonian diffeomorphism has an elliptic periodic point.

\end{proof}

The rest of this section is dedicated to the proof of Theorem \ref{t3}, in which we follow the construction in \cite{5}.

The idea of the proof is to first construct an element $[\{ g_t\}] \in \tHam(M)$ which is $C^1$-close to $\phi$ and such that there exists a small open set $U \subset M$ and an integer $k \in \N$ such that $g^k | _U = \Id$ and $g^j(U) \cap U = \emptyset$ for all $j < k$.

The second step is to perturb $g$ inside $U$ in order to change the value of the asymptotic spectral invariant.

We start with the following lemma.

\begin{lemma}
\label{l1}

Let $[\{f_t\}] \in \tHam(M)$, and denote $f=f_1$. Let $p \in M$ be an elliptic $l$-periodic point of $f$. Then for any $C^1$-open neighborhood $\mathcal{U}$ of $[\{f_t\}]$ and any open neighborhood $V \subset M$ of $p$, there exists an element $[\{g_t\}] \in \tHam(M)$ and $\delta_1 > \delta_2 > 0$ such that $B_{\delta_1}(p)$ lies in a Darboux chart around $p$ and
\begin{enumerate}
\item $[\{g_t\}] \in \mathcal{U}$.
\item $B_{\delta_1}(p) \subset V$.
\item $g$ agrees with $f$ on the orbit of $p$, i.e. $g^i(p) = f^i(p), \forall i \in \{1,\ldots,l\}$.
\item $g$ agrees with $f$ outside the length $l$-orbit of $B_{\delta_1}(p)$, i.e.
 \[g | _{M \setminus \cup_{i=1}^l f^i(B_{\delta_1}(p))} = f | _{M \setminus  \cup_{i=1}^l f^i (B_{\delta_1}(p))}.\]
\item $g^l | _{B_{\delta_2}(p)} = T$, where $T$ is linear with simple, non real eigenvalues of the form $e^{\alpha_j 2 \pi \sqrt{-1}}$ with $\alpha_j$ rational. In this case we say that $T$ has eigenvalues with rational angles.

\end{enumerate}

\end{lemma}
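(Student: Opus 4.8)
The plan is to work entirely inside a Darboux chart around the orbit of $p$ and to modify $f$ by a $C^1$-small Hamiltonian perturbation supported near the orbit. First I would fix a Darboux chart $\varphi$ around $p$ identifying a neighborhood of $p$ with a ball in $(\R^{2n},\omega_0)$ and sending $p$ to $0$; since the orbit points $f^i(p)$, $i=1,\dots,l$, are distinct we may also fix disjoint Darboux charts around each of them, small enough that their $f$-images interleave consistently. In the chart at $p$ the differential $d_p(f^l)$ is a symplectic linear map with simple, non-real, unit-modulus eigenvalues, so it is conjugate (by a symplectic matrix) to a block-diagonal rotation $R(\theta_1,\dots,\theta_n)$ with angles $\theta_j=2\pi\beta_j$, $\beta_j\in(0,1)\setminus\{1/2\}$ irrational in general. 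The first step is to choose $\delta_1>0$ so that $B_{\delta_1}(p)\subset V$, lies in the chart, and so that $B_{\delta_1}(p),f(B_{\delta_1}(p)),\dots,f^{l-1}(B_{\delta_1}(p))$ are pairwise disjoint (possible because $p$ is $l$-periodic with distinct orbit points and $f$ is continuous); this already gives items (2) and prepares (4).

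The heart of the construction is the linearization near the orbit: I would produce $g$ agreeing with $f$ on the orbit and outside the length-$l$ orbit of $B_{\delta_1}(p)$, and equal, in the chart, to a fixed linear symplectic map on a smaller ball $B_{\delta_2}(p)$, where moreover that linear map has rational angles. Concretely, pick rational numbers $\beta_j'$ approximating $\beta_j$ within $\epsilon$ and set $T$ to be (the chart-conjugate of) $R(2\pi\beta_1',\dots,2\pi\beta_n')$; since the $\theta_j$ are simple and non-real and $1/2$ is avoided, for $\epsilon$ small $T$ still has simple, non-real eigenvalues $e^{2\pi\beta_j'\sqrt{-1}}$, giving item (5). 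To realize $g$: in a Darboux chart, $f^l$ near $0$ is $C^1$-close to its linearization $d_0 f^l$, i.e. $f^l(x)=(d_0f^l)x+o(\|x\|)$ with the $o$ term $C^1$-small on small balls. Using a cutoff function $\rho$ that is $1$ on $B_{\delta_2}(p)$ and $0$ outside $B_{\delta_1}(p)$, I would interpolate between $f^l$ and $T$: the map $x\mapsto (1-\rho(x)) f^l(x)+\rho(x)Tx$ is $C^1$-close to $f^l$ provided $\delta_1$ (hence the gradient of $\rho$ times the size of the region, together with $\|f^l-T\|$) is small, and is symplectic after a standard correction because it is $C^1$-close to a symplectic map and, on $B_{\delta_2}$, is exactly symplectic; alternatively one realizes the interpolation directly by a $C^1$-small Hamiltonian whose flow equals $T\circ (d_0f^l)^{-1}$ near $0$ and the identity away from $B_{\delta_1}(p)$, then composes with $f^l$. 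Finally, distribute this correction along the orbit: write the correction of $f^l$ as $\Theta\circ f^l$ for a $C^1$-small symplectomorphism $\Theta$ supported in $B_{\delta_1}(p)$, and choose $g$ so that $g=f$ on $M\setminus\cup_{i=1}^l f^i(B_{\delta_1}(p))$ and $g^l$ picks up exactly the factor $\Theta$ near $p$; because the $l$ orbit balls are disjoint, such a $g$ exists, is $C^1$-close to $f$ (the perturbation is supported in a small set and is $C^1$-small there), agrees with $f$ on the orbit of $p$ (items (3),(4)), and satisfies $g^l|_{B_{\delta_2}(p)}=T$.

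To conclude, I would check $[\{g_t\}]\in\mathcal{U}$: since $g$ is obtained from $f$ by composition with a $C^1$-small Hamiltonian diffeomorphism $\Theta$ supported in $B_{\delta_1}(p)$, one forms $\{g_t\}$ by concatenating $\{f_t\}$ with a short Hamiltonian isotopy from $\Id$ to $\Theta$ supported in $B_{\delta_1}(p)$ and generated by a small Hamiltonian; by the description of $C^1$-neighborhoods in $\tHam(M)$ via the basis $\{\phi\tilde U_i\}$ recalled in the introduction, and since $g$ is $C^1$-close to $f$ while the added isotopy is small, $[\{g_t\}]$ lands in $\mathcal{U}$ for $\delta_1,\epsilon$ small enough. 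The main obstacle I expect is the symplectic interpolation: cutting off between $f^l$ and $T$ does not automatically produce a symplectomorphism, so one must either argue that a $C^1$-small non-symplectic map close to a symplectic one can be corrected to a symplectic one supported in the same region (a Moser-type argument), or, cleanly, perform the interpolation at the level of generating Hamiltonians inside the Darboux chart so that symplecticity is built in; controlling simultaneously that the resulting $g^l$ is \emph{exactly} $T$ on $B_{\delta_2}(p)$ while keeping the $C^1$ perturbation small (which forces $\delta_2\ll\delta_1$) is the delicate quantitative point. The disjointness of the orbit balls and the fact that the whole modification is localized is what makes the global assembly and the homotopy-class bookkeeping routine.
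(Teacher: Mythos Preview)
Your overall strategy matches the paper's: choose disjoint Darboux charts along the orbit, linearize the return map on a small ball, then nudge the angles to be rational, and finally concatenate $\{f_t\}$ with a short Hamiltonian isotopy to obtain $[\{g_t\}]\in\mathcal U$. The difference, and the place where your proposal has a genuine gap, is the mechanism for the exact symplectic linearization.

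Both of your suggested mechanisms fail to produce $g^l|_{B_{\delta_2}(p)}=T$ \emph{exactly}. The convex interpolation $x\mapsto(1-\rho(x))f^l(x)+\rho(x)Tx$ is not symplectic, and a Moser--type correction, while it will return a symplectomorphism $C^1$-close to $f^l$, gives you no control on the inner ball: after the correction the map need not be $T$ on $B_{\delta_2}$. Your alternative, composing $f^l$ with a Hamiltonian flow $\psi$ equal to $T\,(d_0 f^l)^{-1}$ near $0$, yields $\psi\circ f^l(x)=T\,(d_0 f^l)^{-1}f^l(x)=Tx+T\,(d_0f^l)^{-1}\!\cdot O(\|x\|^2)$ on $B_{\delta_2}$, which is again only \emph{approximately} $T$. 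Since the subsequent argument (finding $k$ with $g^k|_B=\Id$) requires the return map to be a linear rotation on the nose, this approximation is not enough.

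The paper closes this gap with generating functions, and it linearizes $f$ (not $f^l$) separately at each orbit point. In a Darboux chart near $f^i(p)$ one writes the generating function of $f$ as a quadratic form plus a remainder $k(q,q')$ of order $\ge 3$, and replaces $k$ by $a_\delta(\|(q,q')\|)\,k$ with a cutoff $a_\delta$; the resulting map is automatically symplectic, equals $d_{f^i(p)}f$ on $B_\delta$, equals $f$ outside $B_{2\delta}$, and is $C^1$-close to $f$ because the $C^2$-norm of the modification is $O(\delta)$ (here is where the argument genuinely uses $C^1$ and would fail for $C^k$, $k>1$). Doing this at every orbit point makes $\bar g^l$ a product of linear maps on a small ball, hence exactly linear, and a further $C^\infty$-small Hamiltonian perturbation at one orbit point turns the angles rational. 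If you want to keep your ``single-point'' packaging $g=\Theta\circ f$, you still need this generating-function cutoff (or an equivalent device) to build a symplectic $\Theta$ that is exactly the required map on $B_{\delta_2}$ and the identity off $B_{\delta_1}$; the hints in your last paragraph stop short of supplying it.
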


\begin{proof}

The idea of the proof is to perturb the generating function of $f$. We divide the proof into three steps.
The first step is to construct a symplectomorphism that would be arbitrarily $C^1$-close to $f$ and such that all the properties asked for the time-1-map $g$ hold, except possibly that its eigenvalues are not rational angles.
The second step is to do another perturbation to get a symplectomorphism $g$ that satisfies all the conditions of the time-1-map $g$ in the lemma.
The last step will be to define a Hamiltonian isotopy from $f$ to $g$, and define $[\{g_t\}]$ to be the juxtaposition of $[\{f_t\}]$ and this Hamiltonian isotopy.

Let us begin with a simpler case.
Let $f:\R^{2n} \to \R^{2n}$ be a symplectomorphism with $f(0) = 0$.
Consider the symplectic matrix $df(0) : \R^{2n} \to \R^{2n}$.
We wish to construct a symplectomorphism $g$ such that for a small $\delta > 0$,
\[ g |_{B_\delta(0)} = df(0),\]
\[ g |_{\R^{2n} \backslash B_{2\delta}(0)} = f .\]

Recall that in a small neighborhood of $0$, there exists a generating function for $f$.
Denote $f(p_1,q_1) = (p_2,q_2)$.
Let $S:\R^{2n}(q,q') \to \R$ be the generating function of $f$, i.e.
\[ \frac{\partial S}{\partial q}(q_1,q_2) = -p_1 ,\]
\[ \frac{\partial S}{\partial q'}(q_1,q_2) = p_2 .\]
Since $f(0) = 0$ we can write
\[ S = \left< q, M_1 q \right> + \left< q, M_2 q' \right> + \left< q', M_3 q' \right> + k(q,q') ,\]
where $M_1,M_2,M_3$ are matrices and $k(q,q')$ has only terms of order greater than two.
Define a smoothened step function $a_\delta : \R \to \R$
\[ a_\delta(x) = \bigg\{ \begin{matrix} 0 & x<\delta \\ 1 & x>2\delta . \end{matrix} \]
Write
\[ \tilde{S} = \left< q, M_1 q \right> + \left< q, M_2 q' \right> + \left< q', M_3 q' \right> + a_{\delta}(\|(q,q') \|) k(q,q') .\]
We can choose $\delta$ so small that there exists a symplectomorphism $g$ such that $\tilde{S}$ is its generating function in a neighborhood that contains $B_{2\delta}(0)$.
Note that
\[g |_{B_\delta(0)} = df(0),\]
\[ g |_{\R^{2n} \backslash B_{2\delta}(0)} = f.\]
In order for $g$ to be $C^1$-close to $f$, we need $\tilde{S}$ to be $C^2$-close to $S$.
One can check that the norm of the difference between the second derivative of $\tilde{S}$ and the second derivative of $S$ is $O(\delta)$. So we can choose $\delta$ so small that $g$ is arbitrarily $C^1$-close to $f$. Note that this construction fails if we would try to make $g$ be $C^k$-close to $f$, for $k > 1$.

Let us return to the proof of the lemma.
For each $0 \leq i < l$ choose a Darboux chart $U_i$ around $f^i(p)$ such that $U_i \cap U_j = \emptyset$, and $f^i(p)$ is identified with $0 \in \R^{2n}$. Take a ball $B_0 \subset U_0$ such that $f^i(B_0) \subset U_i$, and $f^l(B_0) \subset U_0$. Since $f^i(B_0)$ and $f^{i+1}(B_0)$ are subsets of Darboux charts, we can treat $f |_{f^i(B_0)} : f^i(B_0) \to f^{i+1}(B_0)$ as a symplectomorphism between subsets of $\R^{2n}$. From the construction above, we get a symplectomorphism $\bg$ which is a linear map in a small ball inside $f^i(B_0)$ for each $i$, $\bg=f$ outside a larger ball inside $f^i(B_0)$ for each $i$, and $\bg(0) = 0$ for each $i$, that is $\bg^i(p) = f^i(p)$ for each $i$.
Note also that in a small ball $B$ inside $B_0$, $\bg^l: B \to \bg^l(B)$ is the product of all the matrices $d_{f^i(p)}(f |_{f^i(B_0)})$, so it is also linear. Denote this linear map by $\bar{T}$. We get that the symplectomorphism $\bg$ satisfies almost all the properties asked for the time-1-map in the lemma. The only property that possibly does not hold is that the eigenvalues of $\bar{T}$ are with rational angles.

Our next task is to find a $C^\infty$ perturbation $g$ such that $g^l$ restricted to a small enough ball inside $B_0$ is a matrix $T$ whose eigenvalues are with rational angles. Since $p$ is an elliptic point of $f^l$, we can choose such a symplectic matrix $T$ which is close to $\bar{T}$. Denote by $H_1$ the Hamiltonian function defined on $f^{l-1}(B_0)$ that generates $d_{f^{l-1}(p)}(f |_{f^{l-1}(B_0)})$ as its time-1-map. Let $Q$ be the symplectic matrix such that $\bar{T} Q = T$. Let $H_2$ be the Hamiltonian function such that $H_1 + H_2$ generates $d_{f^{l-1}(p)}(f |_{f^{l-1}(B_0)}) Q$ as its time-1-map. Choose a cutoff function $a$ supported in a small ball inside $f^{l-1}(B_0)$, and define the Hamiltonian function of the perturbed symplectomorphism in $f^{l-1}(B_0)$ to be $H_1 + a \cdot H_2$. Denote this new symplectomorphism by $g$. Note that outside a small ball inside $f^{l-1}(B_0)$, $g=\bg$, hence $g$ is a well defined symplectomorphism of $M$.  Since $H_2$ can be chosen arbitrarily small, we get that $g$ is arbitrarily $C^\infty$-close to $\bg$. Recall that
\[\bar{T} = \prod_{i=0}^{l-1} d_{f^i(p)}(f |_{f^i(B_0)}) ,\]
so we get that in a small ball inside $B_0$,
\[ g^l = \left( \prod_{i=0}^{l-2} d_{f^i(p)}(f |_{f^i(B_0)}) \right) d_{f^{l-1}(p)}(f |_{f^{l-1}(B_0)}) Q = \bar{T} Q = T .\]

Hence we can construct a symplectomorphism $g$ such that it is arbitrarily $C^1$-close to $f$, and it satisfies all the conditions in the lemma.

Since $g$ is $C^1$-close to $f$, we can construct a path of symplectomorphisms from $f$ to $g$ such that all symplectomorphisms in the path are $C^1$-close to $f$ (see \cite[Theorem 10.1 and its proof]{1}).
From the fact that $H^1(M) = 0$, we get that this path is a Hamiltonian isotopy.
Define $[\{g_t\}]$ to be the juxtaposition of $\{f_t\}$ and this Hamiltonian isotopy.

\end{proof}

\begin{proof}[Proof of Theorem \ref{t3}]
By Lemma \ref{l1} we find $[\{g_t\}] \in \tHam(M)$ that satisfies all the conditions in the lemma.
If $\sigma([\{g_t\}]) \neq 0$ then we are done, so suppose $\sigma([\{g_t\}]) = 0$.
Since all the eigenvalues of $T$ have rational angles, there is an integer $q$ (the smallest common multiple of the denominators) such that
\[g^{ql}|_{B_{\delta_2}(p)} = \Id .\]
Let $k \in \N$ be the smallest number such that $g^k|_{B_{\delta_2}(p)} = \Id$.
There exists $x \in B_{\delta_2}(p)$ such that $g^j(x) \neq x$ for all $0 < j < k$.
By continuity there is a ball $B \subset B_{\delta_2}(p)$ around $x$, such that
\[ g^j(B) \cap B = \emptyset ,\]
for all $0 < j < k$. We can choose $B$ such that the open set $\cup_{j=1}^k g^j(B)$ is displaceable.

Let $H : M \to [0,\infty)$ be a non-vanishing time independent Hamiltonian function supported in $B$. For $\epsilon > 0$, let $\{h'^\epsilon_t\}$ be the Hamiltonian isotopy generated by $\epsilon \cdot H$.
Put
\[ h^\epsilon_t(x) = \left\{ \begin{matrix} g^j \circ h'^\epsilon _t \circ g^{-j}(x) & x \in g^j(B), j=0,\ldots,k-1 \\
                                x & \text{otherwise} \end{matrix} \right. \]
Since $\cup_{j=1}^k g^j(B)$ is displaceable,
\[ \sigma([\{h^\epsilon_t\}]) = -\frac{1}{\int_M \omega^n} \Cal(h^\epsilon) = -\frac{k}{\int_M \omega^n} \Cal(h'^\epsilon) < 0.\]
The important part of this construction is that we get that the time-1-maps commute, i.e. $h^\epsilon _1 \circ g_1 = g_1 \circ h^\epsilon _1$.

\vspace{4 mm}

\textbf{Claim:} For a small enough $\epsilon$,
\[ [\{ g_t h^\epsilon_t\}] = [\{ h^\epsilon_t g_t\}].\]

\textbf{Proof:}  For small enough $\epsilon$, the path $\{ g_t h^\epsilon_t g_t^{-1} (h^\epsilon_t) ^{-1} \}$ is arbitrarily $C^1$-close to $\Id$. From this and from Proposition \ref{p3} we get that
\[ [  \{ g_t h^\epsilon_t g_t^{-1} (h^\epsilon_t) ^{-1} \} ] = \Id .\]
This completes the proof of the claim.

\vspace{4 mm}

From Proposition \ref{p2} we get that
\[ \sigma([ \{ g_t h^\epsilon_t \}]) = \sigma([\{g_t\}]) + \sigma([\{h^\epsilon_t\}]) = \sigma([\{h^\epsilon_t\}]) < 0 .\]
We can choose $\epsilon$ so small that $[\{g_t \circ h^\epsilon_t\}] \in \mathcal{U}$. This completes the proof.

\end{proof}

\section{$\tHam$ vs. $\Ham$}

Until now we discussed the notion of distortion of cyclic subgroups of $\tHam(M)$. One can ask if the same construction works if one considers undistorted cyclic subgroups of $\Ham(M)$ equipped with Hofer's metric (also denoted $d$).

Let $\pi : \tHam(M) \to \Ham(M)$ be the projection. Since $\pi$ is continuous and open, we get that if a set $S \subset \tHam(M)$ is open or dense in $\tHam(M)$, then $\pi(S) \subset \Ham(M)$ is open or dense respectively. In the case where $\sigma$ descends to $\Ham(M)$ we get that $\pi(\chi) \subset \Ham(M)$ is a $C^1$-open and dense subset of the set of Hamiltonian diffeomorphisms that generate undistorted cyclic subgroups. In the case where $\sigma$ descends our results thus extend to $\Ham(M)$.

 \begin{theorem}
 \label{t9}
 Let $M$ be a closed symplectic manifold such that
 \begin{enumerate}
 \item $H^1(M) = 0$.
 \item The top Chern class does not vanish, $c_n(M) \neq 0$.
 \item The full Chern class does not have an even factorization.
 \item There exists an asymptotic spectral invariant that descends to $\Ham(M)$.

 \end{enumerate}
 Then the set of elements in $\Ham(M)$ that generate undistorted cyclic subgroups has a $C^1$-open and dense subset.

 \end{theorem}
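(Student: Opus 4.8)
The plan is to obtain Theorem~\ref{t9} as a direct consequence of the machinery behind Theorem~\ref{t5}, transported from $\tHam(M)$ to $\Ham(M)$ along the covering projection $\pi$. Under hypotheses~(1)--(3), the proof of Theorem~\ref{t5} already shows that the set $\chi = \{\phi \in \tHam(M) : \sigma(\phi) \neq 0\}$ is $C^1$-open and dense in $\tHam(M)$ for \emph{every} asymptotic spectral invariant $\sigma$: openness is Theorem~\ref{t4}, and density follows by combining Theorem~\ref{t1} (no even factorization implies no partially hyperbolic symplectomorphism), Theorem~\ref{t8} ($C^1$-generically the time-$1$-map then has an elliptic periodic point), and Theorem~\ref{t3} (such elements can be $C^1$-perturbed into $\chi$). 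So the only genuinely new ingredient here is hypothesis~(4).

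Next I would fix an asymptotic spectral invariant $\sigma$ that descends to $\Ham(M)$, i.e.\ write $\sigma = \bar{\sigma} \circ \pi$ for some function $\bar{\sigma} : \Ham(M) \to \R$. Since $\sigma(\phi)$ then depends only on $\pi(\phi)$, we get $\pi(\chi) = \{ f \in \Ham(M) : \bar{\sigma}(f) \neq 0 \}$. Because $\pi$ is continuous and open and $\chi$ is $C^1$-open and dense in $\tHam(M)$, the image $\pi(\chi)$ is $C^1$-open and $C^1$-dense in $\Ham(M)$.

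It then remains to check that $\pi(\chi)$ lies inside the set of $f \in \Ham(M)$ generating undistorted cyclic subgroups; this is the analogue for $\Ham(M)$ of Proposition~\ref{p1}. By definition of the two Hofer metrics, the set of Hamiltonians generating a given $f \in \Ham(M)$ is the union, over all lifts $\phi$ with $\pi(\phi) = f$, of the sets of Hamiltonians whose flow is in class $\phi$; hence $d(\Id, f) = \inf_{\pi(\phi) = f} d(\Id,\phi) \geq \inf_{\pi(\phi)=f} |\sigma(\phi)| = |\bar{\sigma}(f)|$, where the middle inequality uses the bound $d(\Id,\phi) \geq |\sigma(\phi)|$ from the proof of Proposition~\ref{p1}. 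Since $\sigma$ is homogeneous, so is $\bar{\sigma}$ (because $\pi(\phi^n) = f^n$), giving $d(\Id, f^n) \geq n\,|\bar{\sigma}(f)|$ and therefore $\lim_{n\to\infty} d(\Id,f^n)/n \geq |\bar{\sigma}(f)| > 0$ for every $f \in \pi(\chi)$. Thus $\pi(\chi)$ is the desired $C^1$-open and dense subset.

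I expect no serious obstacle: all of the substantive work is already carried out in Theorem~\ref{t5} and its supporting results. The only points requiring care are bookkeeping ones — that $\bar{\sigma}$ inherits homogeneity and the Hofer lower bound from $\sigma$, and that the downstairs Hofer metric really is the infimum of the lifted one over the fiber $\pi^{-1}(f)$ — together with the observation that hypothesis~(4) is exactly what is needed for $\chi$ to descend to a well-defined $C^1$-open dense set in $\Ham(M)$.
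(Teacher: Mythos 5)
Your proof is correct and follows essentially the same route as the paper: the paper also pushes the $C^1$-open, dense set $\chi$ forward along the continuous open surjection $\pi:\tHam(M)\to\Ham(M)$ and uses hypothesis~(4) to identify $\pi(\chi)$ with $\{f:\bar\sigma(f)\neq 0\}$, which sits inside the set of undistorted elements via the same Hofer lower bound $d(\Id,f)\geq|\bar\sigma(f)|$ and homogeneity. The paper states this tersely in Section~5; you have merely spelled out the bookkeeping (the fiberwise infimum characterization of the downstairs Hofer metric and the inherited homogeneity of $\bar\sigma$), which is a fair expansion of the same argument.
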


In \cite{8} McDuff gives conditions under which the asymptotic spectral invariants descend to $\Ham(M)$. In particular, we get that for $\CPn$ the asymptotic spectral invariants descend to $\Ham(\CPn)$.
In Example \ref{e1} we show that there is no even factorization of $c(T\CPn)$ and this proves Theorem \ref{t7}.

\section{Examples}
In this section we give examples of manifolds that satisfy the requirements of Theorem \ref{t5}. For these manifolds a $C^1$-generic element of $\tHam(M)$ generates an undistorted cyclic subgroup.

\begin{example}
Let $M = S^2$ be the 2-sphere. Note that $c_1(M) \neq 0$ and $H^1(M) = 0$, and obviously there is no even factorization of the full Chern class.
\end{example}

\begin{example}[Proof of Theorem \ref{t10}]
\label{e2}
Let $M^4$ be a 4-dimensional closed symplectic manifold such that $H^1(M) = 0$ and $M$ has a non-vanishing top Chern class, $c_2(M) \neq 0$.
Suppose that there is an even factorization $c(M) = \alpha \beta$.
This means that $0 < \deg(\alpha) < 2$ and $\alpha$ has only terms of even degree, which is impossible.
This proves Theorem \ref{t10}.
\end{example}

\begin{example}[Proof of Theorem \ref{t7}]
\label{e1}
Let $M = \CPn$. The full Chern class is
\[ c(M) = (1+a)^{n+1} - a^{n+1} , \]
where $a$ is a suitably chosen generator of $H^2(M)$ (see \cite{7}).
The top Chern class is
\[ c_n(M) = n+1 \neq 0.\]
Write
\[c(M) = C \prod_{i=1}^n (a - a_i)\]
where $C$ is a constant and
\[ a_i = \frac{1}{z_{n+1}^i - 1} ,\]
where $z_{n+1}$ is a primitive $n+1$-st root of unity.
Suppose that there is an even factorization $c(M) = \alpha \beta$. Note that we assume that $\deg(\alpha) + \deg(\beta) \leq n$ so when calculating the product $\alpha \beta$ the term $a^{n+1}$ will not appear, so in our calculation we can ignore the relation $a^{n+1} = 0$, and consider $c(M),\alpha,\beta$ as elements in the polynomials ring in the variable $a$.
Because we assume that $\deg(\alpha)>0$, there exists a root $x$ of the polynomial $c(M)$ that is a root of $\alpha$. Because $\alpha$ has only terms of even degree,  we get that $-x$ is also a root of $\alpha$ and hence a root of $c(M)$. Hence there are $1 \leq i_1,i_2 \leq n$ such that $a_{i_1} = - a_{i_2}$, i.e.
\[ \frac{1}{z_{n+1}^{i_1} - 1} = \frac{-1}{z_{n+1}^{i_2} - 1} \]
\[ z_{n+1}^{i_1} + z_{n+1}^{i_2} = 2. \]
Note that $|z_{n+1}^{i_1} | = 1$ and $| z_{n+1}^{i_2} | = 1$ but their sum is $2$ so we get that both are equal to $1$. This is a contradiction.
Hence $\CPn$ satisfies the requirements of Theorem \ref{t5}.
This together with Theorem \ref{t9} proves Theorem \ref{t7}.

\end{example}

\begin{example}
Let $M$ be the 1-point blow-up of $\mathbb{C}\mathbb{P}^3$.
We will show that $M$ satisfies the conditions of Theorem \ref{t5}.
The cohomology ring of $M$ is generated by 2 generators, the pull-back $a \in H^2(M)$ of a generator of $H^*(\mathbb{C}\mathbb{P}^3)$ and the \Poincare \; dual $b \in H^2(M)$ of the exceptional divisor, with the relations
\[ a b = 0, b^3 = a^3.\]

The full Chern class of $M$ is
\[ c(M) = 1 + 4a + 6a^2 + 6a^3 - 2b.\]
To see this let $\bar{a}$ be the corresponding generator of $H^*(\mathbb{C}\mathbb{P}^3)$, and write
\[ c(\mathbb{C}\mathbb{P}^3) = 1 + 4\bar{a} + 6\bar{a}^2 + 4\bar{a}^3 .\]
One can use this to compute the first and second Chern class of $M$ by a formula given in \cite[pp. 608--609]{11} and get that
\[ c_1(M) = 4a - 2b,\]
\[ c_2(M) = 6a^2.\]
We are only left with computing the top Chern class. Since the top Chern class is equal to the Euler class, in order to calculate it one needs to know the alternating sum of the Betti numbers. In our situation the odd cohomology groups vanish, so we only need to count the dimensions of the even cohomology groups. The pull back of $H^*(\mathbb{C}\mathbb{P}^3)$ contributes four even dimensions (generated by $1,a,a^2,a^3$), and by performing the blow-up we added an additional two even dimensions (generated by $b$ and $b^2$). Hence we get that the alternating sum of the Betti numbers is 6, and this gives us the final formula for the full Chern class.
See also \cite[Example 15.4.2(c)]{12}.

Suppose that $c(M) = \alpha \beta$ is an even factorization.
For a general $\alpha \in H^*(M)$ with even degrees and a general $\beta \in H^*(M)$, one can write
\[ \alpha = 1 + n_1 a^2 + n_2 b^2 .\]
\[ \beta = 1 + m_1 a + m_2 b.\]
Calculate
\[ \alpha \beta = (1+n_1 a^2) (1 + m_1 a) + n_2 b^2 + m_2 b + n_2 m_2 b^3 = c(M). \]
We get that $m_2 = -2, n_2 = 0$.
Since the coefficient of $b^3$ is zero,
\[ (1+n_1 a^2) (1 + m_1 a) = 1 + 4a + 6a^2 + 6a^3 := q(a).\]
We get from the factorization of $q$ above that there exist two roots $a_i,a_j$ of $q$ with $a_i = -a_j$.
The roots of the polynomial $q$ are
\[ a_1 \approx -0.38839, a_2 \approx -0.30581-0.57932 \sqrt{-1}, a_3 \approx -0.30581+0.57932 \sqrt{-1} ,\]
that is $q$ does not have roots such that $a_i = -a_j$, a contradiction.

Hence $M$ satisfies the conditions of Theorem \ref{t5}.

\end{example}

\begin{example}
Let $M = \mathbb{C}\mathbb{P}^2 \times \mathbb{C}\mathbb{P}^2$.
The cohomology of $M$ is generated by two generators $a,b$ with the relations $a^3 = b^3 = 0$.
The full Chern class is
\[ c(M) = (1 + 3a + 3a^2) (1 + 3b + 3b^2) .\]
Write a general even factorization
\[ c(M) = \alpha \beta.\]
Since $0 < \deg(\alpha) < 4$ and it is even, we get that $\deg(\alpha) = 2$.
From the equation $c(M) = \alpha \beta$ we can deduce that $\deg(\alpha)+\deg(\beta) \geq 4$. From the definition of an even factorization $\deg(\alpha) + \deg(\beta) \leq 4$ and hence $\deg(\beta) = 2$.
One can write
\[ \alpha = 1 + c_1 a^2 + c_2 b^2 + c_3 a b, \]
\[ \beta = 1 + d_1 a + d_2 a^2 + d_3 b + d_4 b^2 + d_5 a b .\]
Look at the equality $c(M) = \alpha \beta$. Each coefficient in $c(M)$ gives us an equation for the variables
\[ c_1,c_2,c_3,d_1,d_2,d_3,d_4,d_5.\]
Hence, we have 8 equations and 8 variables.
One can solve these equations and get two sets of solutions where each of them has non-integer values.
Since $\alpha$ and $\beta$ are integral classes, this is a contradiction.
Hence $c(M)$ has no even factorization, and $M$ satisfies the conditions of Theorem \ref{t5}.

\end{example}

\vspace{5 mm}

\textbf{Acknowledgement:}  This article was written under the guidance of Professor Leonid Polterovich. I also wish to thank Viktor Ginzburg and Amie Wilkinson for their helpful comments and suggestions. I wish to thank the anonymous referee for his/her thorough review and very useful comments.

School of Mathematical Sciences

Tel Aviv University

Tel Aviv 6997801, Israel

asafkisl@post.tau.ac.il

\end{document}